
\documentclass{amsart}
\usepackage{euscript,verbatim, todonotes}           
\usepackage{amsmath,amsthm}     
\usepackage{amssymb}            

\raggedbottom
\tolerance=3000
\hbadness=10000
\vbadness=10000
\hfuzz=1.5pt
\vfuzz=1.5pt

\DeclareMathOperator{\Map}{Map}
\newlength{\labwidth}
\newcommand{\labarrow}[1]{
\settowidth{\labwidth}{$\scriptstyle \;\; #1 \;\;$}
\stackrel{#1}{\smash{\hbox to \labwidth{\rightarrowfill}}
\vphantom{\longrightarrow}}
}

\renewcommand{\S}{{\mathbb S}}

\renewcommand{\O}{{\cal O}}

\renewcommand{\O}{{\mathcal O}}

\usepackage {epsf}
\DeclareMathOperator*{\colim}{colim}

\newcommand {\F}{{\mathbb F}}
\newcommand {\Z}{{\mathbb Z}}
\newcommand {\Q}{{\mathbb Q}}

\newcommand {\W}{{\mathbb W}}

\newcommand {\C}{{\mathbb C}}
\newcommand {\G}{{\mathbb G}}

\newcommand{\lra}{\longrightarrow}              

\newcommand{\cat}{\mathcal}    

\newfont{\german}       {eufm10 at 12pt}
\DeclareMathOperator{\Hom}{Hom}

\numberwithin{equation}{section}
\newtheorem{thm}{Theorem}[section]
\newcounter{numerierer}
\newcounter{leer}

\newtheorem{defn}[thm]{Definition}

\newtheorem{prop}[thm]{Proposition}

\newtheorem{cor}[thm]{Corollary}
\newtheorem{lemma}[thm]{Lemma}

\theoremstyle{definition}  
\newenvironment{definition}{\begin{defn}\rm}{\end{defn}}

\newtheorem{set theory}[thm]{Set Theoretic Prelude}

\newtheorem{convention}[thm]{Convention}

\newtheorem{remark}[thm]{Remark}
\setcounter{tocdepth}{2}


\usepackage[frame,matrix,curve, arrow]{xy}

\xymatrixcolsep{1.9pc}                          
\xymatrixrowsep{1.9pc}
\newdir{ >}{{}*!/-5pt/\dir{>}}                  
\newdir{ |}{{}*!/-5pt/\dir{|}}                  

\raggedbottom

\tolerance=3000

\hbadness=10000

\hfuzz=1.5pt

\subjclass{}
\begin{document}

\title{Cannibalistic Classes of string bundles}
\author{Gerd Laures and Martin Olbermann}

\address{ Fakult\"at f\"ur Mathematik,  Ruhr-Universit\"at Bochum, NA1/66, D-44780 Bochum, Germany}


\subjclass[2000]{Primary 55N34; Secondary 57R20, 55N22}

\date{\today}

\begin{abstract}
We introduce cannibalistic classes for string bundles with values in $TMF$ with level structures. This allows us to compute the Morava $E$-homology of any map from the bordism spectrum $MString$ to $TMF$ with level structures.  

\end{abstract}

\maketitle

\section{Introduction and statement of results}
Suppose $E$ is a cohomology theory and $V$ is a bundle over a  space $X$ which is equipped with a Thom class  $\tau$ with respect to $E$. Then for each stable operation $\psi^g$ in $E$-theory one obtains a cannibalistic class $\theta^g(V)$ by the formula 
\begin{eqnarray} \psi^g (\tau)&=&\theta^g (V) \tau.\end{eqnarray}
For example, for singular cohomology with coefficients in $\F_2$ the cannibalistic class $\theta^k$ associated to the Steenrod square $Sq^k$ is given by the $k$th Stiefel-Whitney class. Another example arises in real and complex   $K$-theory for Adams operations (see \cite{MR0153019}). In the complex case, the cannibalistic classes of the canonical  line bundle $L$ over $\C P^\infty$ can be computed via the formula
\begin{eqnarray}\label{Euler} \theta^g(L)&=& \frac{\psi^g (x)}{x}\in \Z[\!]x]\!]\end{eqnarray}
where $x$ is the Euler class.
The cannibalistic classes  play an important role in the investigation of bordism invariants and spherical fibrations.\par
In this paper, we are interested in cannibalistic classes for string bundles with values in the spectrum of topological modular forms with various level structures or in Morava $E$-theory. Here, the  `string'-structure for a bundle refers to a spin structure together with a lift of the classifying map to the next higher connective cover $BO\langle 8\rangle$ of $BSpin$. The infinite loop space $BO\langle8\rangle$ is also called $BString$ and its associated Thom spectrum is called $MString$.  For a more detailed introduction to string bordism and the spectrum of topological modular forms $TMF$, the reader is referred to the survey articles \cite{MR1989190} and \cite {MR2648680}.\par
We will show how the cannibalistic classes for string bundles  are related to the  characteristic classes defined in \cite{MR3448393}. This enables us  to compute the $E$-homology of maps form  the string bordism spectrum $MString$ to the spectrum of topological modular forms with level structures.  \par
We will  now describe the results in more detail. In the $K(2)$-local category at the prime 2 the spectrum $TMF(3)$ of topological modular forms with respect to the level structure $\Gamma(3)$ coincides with the Morava $E$-theory spectrum $E_2$ and there is a continuous action of the Morava stabilizer group $\G_2$ on $E_2$. Moreover, string manifolds have a natural Thom class with values in $TMF(3)$ and in all of its fixed point spectra for finite subgroups of $\G_2$. This is  known as the Witten orientation \cite{AHR10}. Hence, there are stable cannibalistic classes $\theta^g$ for string bundles and for all elements of $\G_2$. \par
In \cite{MR3448393}, all $TMF_i(3)$-characteristic classes for string bundles have been computed for $i=0,1$. It turned out that they are generated by Pontryagin classes and one more class $r$ which, in the complex setting, measures the difference between the complex orientation from the elliptic curve and the Witten orientation. 

We compute the action of $\G_2$ on the class $r_U$ and its restriction $r_K$  to $K(\Z,3)$. It turns out that the action is given by the formula
$$\psi^g(r_K)=r_K^{\det(g)}$$
where $\det$ is the determinant.
This formula is related to results of Peterson \cite{Pet11}, Westerland \cite[Theorem 3.21]{W13}, Hopkins and Lurie \cite{HL13}. However, our proof is independent and self-contained.

This formula implies the calculation of the cannibalistic class $\theta^g$ for arbitrary string bundles. They are determined by the formula

\begin{eqnarray} \label{formula 1}
 (\theta^g )^2 & = &c^* (q_0^g \theta_\C^g) r^{det(g)-1} .
\end{eqnarray}

Here, $c$ is the complexification map, $\theta_\C^g$ is the cannibalistic class associated to the complex orientation satisfying Formula \ref{Euler} above and, finally,  $q_0^g$ is a specific $SU$-characteristic class. It describes the difference between the cubical structures of the underlying elliptic curves and its perturbation by $g$. It will be computed in more detail in section 3. 
 We will prove a splitting principle for string bundles which reduces the calculation of the string cannibalistic classes to  (\ref{formula 1}).\par
The formula allows us to compute the Morava $E$-homology of any map from the Thom spectrum $MString$ to $TMF_i(3)$. In \cite{MR3448393} and \cite{MR3471093} it was shown that the cohomology of $MString$ is topologically  freely generated  by $TMF_i(3)$-Pontryagin classes and the class $r$ described above.  The precise formulas in terms of the  dual generators  are given in section \ref{homology}.
\par
These results will prove useful in the investigation of indecomposable summands  of  string bordism. We think that in the $K(2)$-local category there is an additive splitting of $MString$ similar to the Anderson-Brown-Peterson splitting in which $TMF$ and $TMF$ with level structures appear as direct summands.
The computations in this work are all very explicit and can help to find such a splitting. However, the only class which prevents us to set up the splitting formula is the class $q_0$ described above. It seems to be hard to write down in a closed form in high dimensions. 
\subsubsection*{Acknowledgements.} The authors would like to thank Craig Westerland  and Thomas Nikolaus for helpful discussions. They are also grateful to the referee for useful hints to clarify and streamline some issues.

\section{$TMF$ with level structure and characteristic classes}
In this section we recall the definition of the characteristic classes in the spectrum of topological modular forms with level structures. We refer the reader to the articles \cite{MR2508904}, \cite{MR3471093}  and \cite{MR3448393} for an account on the spectra 
$$T_i=TMF_i(3)$$
for  $i\in \{ \emptyset, 0,1\}$. \par
Consider the supersingular elliptic curve in Weierstrass from
$$C: \quad y^2 +y=x^3$$  
over $\F_4$. There is a Lubin-Tate spectrum $E_2$ (also called Morava $E$-theory) associated to the formal group $\hat{C}$ with coefficients in the ring of power series over the Witt vectors of $\F_4$
$$E_2{}_*=\W(\F_4)[\! [u_1]\!][u^\pm].$$
Let $\G_2$  be the automorphism group of $\hat{C} \to Spec\  \F_4$ and let $G$ be the one of $C\to Spec\  \F_4$ (i.e. automorphisms are commutative squares). 
There is an isomorphism $\G_2 \cong \S_2 \rtimes Gal(\F_4:\F_2)$, where the subgroup $\S_2$  corresponds to commutative diagrams
\begin{eqnarray*}\label{}
&\xymatrix{
\hat{C} \ar[d]\ar[r]^-g
&
\hat{C}\ar[d]
\\
Spec\ \F_4
\ar[r]^-=
&
Spec\ \F_4,
}
\end{eqnarray*} 
and the Galois group acts via the Frobenius, which gives a commutative diagram
\begin{eqnarray*}\label{}
&\xymatrix{
\hat{C} \ar[d]\ar[r]^-\sigma
&
\hat{C}\ar[d]
\\
Spec\  \F_4
\ar[r]^\sigma
&
Spec\  \F_4.
}
\end{eqnarray*} 
\bigskip

Abstractly,  the group $\S_2$ can be described as the group of units in the maximal order $$End(\hat{C})=\O_2=\Z_2\{\omega,i,j,k\}$$ of the 2-adic quaternion algebra $$End(\hat{C})\left[\frac12\right]=D_2=\Q_2\{1,i,j,k\}.$$
Here $\omega=\frac12(-1-i-j-k)$ such that $\omega^2=-\omega -1,\omega^3=1$.
One computes that $\omega i \omega^{-1}=j, \omega j \omega^{-1}=k, \omega k \omega^{-1}=i$.
Note that we do not make a notational difference between the endomorphisms of $\hat{C}$ and the endomorphisms of the Honda formal group law; these rings are isomorphic. A more detailed account on the structure of the Morava stabilizer group can be found in \cite{MR3450774}.

The group $G$ is a maximal finite subgroup of $\G_2$ and has order 48. 
The automorphisms  $-1,\omega,i,j,k$ arise from automorphisms of $C$ over the identity of $\F_4=\F_2[\alpha]$:
\begin{itemize}
\item $-1$ corresponds to mapping each point on the curve to its negative, i.e. $x\mapsto x,y\mapsto y+1$,
\item $\omega$ corresponds to $x\mapsto \alpha x,y\mapsto y$, and to the automorphism $g(x)=\alpha x$ of the formal group,
\item $i$ corresponds to $x\mapsto x+1, y\mapsto x+y+\alpha$.
\end{itemize}
The Frobenius map $\sigma$ is the generator of $Gal(\F_4:\F_2)$, it acts via $x\mapsto x^2, y\mapsto y^2$ on $C$
as an automorphism over the Frobenius map of $\F_4$.
 
Note that $G\cong G_{24}\rtimes Gal(\F_4:\F_2)$, where $G_{24}$ is generated by $-1,\omega,i,j,k$,
and  $G_{24}\cong Q_8\rtimes C_3$, where $C_3$ is generated by $\omega$ and $Q_8=\{\pm 1,\pm i,\pm j, \pm k\}$.

The group of $\F_4$-points of $C$ is isomorphic to $(\Z/3)^2$. Via the induced action of $G$ we obtain an isomorphism
$$G\cong GL_2(\Z/3).$$ 
Choosing as generators the $\F_4$-points $(0,0)$ and $(1,\alpha)$, this isomorphism sends
$$-1\mapsto \begin{pmatrix}-1 &0\\0&-1 \end{pmatrix},\quad \omega\mapsto \begin{pmatrix}1 &1\\0&1 \end{pmatrix},\quad i\mapsto \begin{pmatrix}0 &-1\\1&0 \end{pmatrix}, 
\quad \sigma\mapsto  \begin{pmatrix}1 &0\\0&-1 \end{pmatrix}.$$
In particular $SL_2(\Z/3)$ corresponds to the group $G_{24}$, the subgroup $G_0$ of $GL_2(\Z/3)$ which fixes the subgroup $\Z/3\times 0$ corresponds to the group generated by $-1,\omega, \sigma$ and 
the group $G_1$ which fixes the point $(1,0)$ of $(\Z/3)^2$ corresponds to the group generated by $\omega$ and $\sigma$.

The following result is a version of the Goerss-Hopkins-Miller theorem (see \cite[Chapter 12]{MR3223024}).

\begin{thm} 
Let $K(2)$ be the second Morava $K$-theory at the prime 2.  
There is a continuous action $\psi$  of $G$ on $E_2$ and  canonical isomorphisms
\begin{eqnarray*}
L_{K(2)}TMF&\cong& E_2^{hG},\\
L_{K(2)}T_0&\cong& E_2^{hG_0},\\
L_{K(2)}T_1&\cong& E_2^{hG_1},\\
L_{K(2)}T&\cong& E_2.\\
\end{eqnarray*}
\end{thm}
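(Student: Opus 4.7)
The plan is to obtain this theorem by combining the sheaf-theoretic form of the Goerss-Hopkins-Miller theorem with derived \'etale descent along the level-structure covers of the moduli stack of elliptic curves.

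First, I would recall that $TMF$ is the spectrum of global sections of a sheaf $\O^{top}$ of $E_\infty$-ring spectra on the moduli stack $\M_{ell}$, and that the level-structure covers $\M_{ell}[\Gamma_0(3)]$, $\M_{ell}[\Gamma_1(3)]$, $\M_{ell}[\Gamma(3)]$ are finite \'etale (the last being Galois with group $GL_2(\Z/3)$) away from the prime $3$. Pulling $\O^{top}$ back along these covers and taking global sections produces $T_0$, $T_1$ and $T$ respectively. At $p=2$, $K(2)$-localization corresponds geometrically to restricting to the height-$2$ locus of $\M_{ell}$, which in characteristic $2$ consists of a single geometric supersingular point $\mathrm{Spec}\,\F_4 \to \M_{ell}$ classifying the curve $C$ fixed at the start of the section. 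By the Lubin-Tate/Goerss-Hopkins-Miller theorem applied to $\hat C$, the value of $\O^{top}$ on the formal \'etale cover corresponding to the universal deformation of $\hat C$ is precisely the spectrum $E_2$ with its continuous $\G_2$-action.

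Second, I would identify $G \subset \G_2$ as the finite subgroup of automorphisms of $\hat C$ that lift to automorphisms of $C$ itself, and use the induced action on $C(\F_4) \cong (\Z/3)^2$ to realize $G \cong GL_2(\Z/3)$, with $G_0$ and $G_1$ being the stabilizers of a cyclic order-$3$ subgroup and of a non-zero $3$-torsion point, respectively, exactly as described in the introduction. Consequently the pullbacks of $\mathrm{Spec}\,\F_4 \to \M_{ell}$ along the three level covers are respectively a $G/G_0$-, a $G/G_1$-, and a $G$-Galois object over the formal deformation of $\hat C$. Derived \'etale descent, realised concretely by the homotopy fixed-point spectral sequence for the $G$-action on $E_2$ and its restrictions to $G_0$ and $G_1$, then yields all four claimed identifications.

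The main obstacle is the sheaf-theoretic upgrade: passing from the rigidification of $E_2$ as an $E_\infty$-ring with $\G_2$-action to the corresponding statement for $\O^{top}$ on the level-$3$ covers, and verifying Galois descent after $K(2)$-localization (which must be done at the derived level since the covers are no longer classically \'etale after localizing). Both steps are settled in \cite[Chapter 12]{MR3223024}, from which we take the theorem.
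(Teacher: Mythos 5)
Your proposal is correct and follows essentially the same route as the paper, which offers no independent argument but simply quotes this statement as a version of the Goerss--Hopkins--Miller theorem from \cite[Chapter 12]{MR3223024}; your outline (the sheaf $\O^{top}$, the level-$3$ covers, the unique supersingular point in characteristic $2$ with $G\cong GL_2(\Z/3)$ acting through automorphisms of $C$ and the Galois group, and $K(2)$-local descent via homotopy fixed points) is exactly the standard argument contained in that reference, to which you also defer for the two genuinely hard steps.
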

\begin{convention}
In the sequel we will work in the $K(2)$-local category unless otherwise stated. We will omit the localization functor from the notation. In particular, a product $X\wedge Y$ denotes the $K(2)$-localization of the standard product.
\end{convention}
The following result can be found in \cite{MR1333942} and with more details in \cite{MR1763961} and \cite{MR2076002}.
\begin{prop} 
An element $f\in \pi_k (E_2 \wedge T_i)$ gives rise to the continuous function 
from $\G_2/G_i$ to $\pi_k E_2$ which sends $g$ to  the composite of $f$ with 
$$ \xymatrix{E_2\wedge T_i \ar[rr]^{ 1\wedge \psi^g  }&&E_2\wedge T_i\ar[r] & E_2 \wedge E_2\ar[r]& E_2.}$$  
This map is an isomorphism
\begin{eqnarray*}
\phi: \pi_* (E_2 \wedge T_i)& \stackrel{\cong}{\lra} &  \Map_{cts}(\G_2/G_i , \pi_* E_2)
\end{eqnarray*}
\end{prop}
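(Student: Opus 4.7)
The plan is to reduce the statement to the Devinatz--Hopkins computation of the homotopy of $E_2 \wedge E_2^{hH}$ for closed subgroups $H \leq \G_2$, combined with the identification $T_i \simeq E_2^{hG_i}$ supplied by the preceding theorem. The central input (the content of \cite{MR1763961, MR2076002}) is the existence of a natural $E_2$-module equivalence
$$E_2 \wedge E_2^{hH} \;\simeq\; \Map_{cts}(\G_2/H, E_2)$$
in the $K(2)$-local category, arising from the fact that $L_{K(2)}S \to E_2$ is a pro-Galois extension with Galois group $\G_2$. The specialization $H = 1$ gives $\pi_\ast L_{K(2)}(E_2 \wedge E_2) \cong \Map_{cts}(\G_2, \pi_\ast E_2)$, and the general case follows by taking $H$-homotopy fixed points on the second smash factor.

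First, I would verify that the map $\phi$ described in the statement is well-defined. For $h \in G_i$ the action $\psi^h$ on $T_i = E_2^{hG_i}$ is canonically homotopic to the identity, so the composite
$$E_2 \wedge T_i \xrightarrow{1 \wedge \psi^{gh}} E_2 \wedge T_i \to E_2 \wedge E_2 \to E_2$$
agrees with the one for $g$. Hence $\phi(f)$ factors through $\G_2/G_i$, and continuity with respect to the profinite topology on $\G_2$ is ensured by the continuous $\G_2$-action on the spectrum-level model of $E_2$.

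Second, I would match $\phi$ with the abstract Devinatz--Hopkins isomorphism. For $H = 1$, the two agree essentially by construction: the evaluation-at-$g$ component of the isomorphism $\pi_\ast(E_2 \wedge E_2) \cong \Map_{cts}(\G_2, \pi_\ast E_2)$ is realized by smashing with $\psi^g$ on the second factor and multiplying. For general $H = G_i$ the identification then follows from naturality with respect to the map $E_2^{hG_i} \to E_2$, using that the latter is $G_i$-equivariant up to coherent homotopy and exhibits $T_i$ as the appropriate fixed point object.

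The main obstacle is the first input itself: proving the equivalence $E_2 \wedge E_2^{hH} \simeq \Map_{cts}(\G_2/H, E_2)$ requires genuine care with the $K(2)$-local smash product and with the continuous nature of the $\G_2$-action, and is precisely the content of the Devinatz--Hopkins machinery. Once this is accepted, the proposition is bookkeeping: identifying the two natural maps and checking that they agree on the generating class. Since the statement is cited from \cite{MR1333942, MR1763961, MR2076002}, I would present the argument essentially as a recollection and pointer to those references, spelling out only the explicit formula for $\phi$ and its coset-invariance to justify the description given here.
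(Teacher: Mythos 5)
Your proposal is correct and coincides with how the paper handles this statement: the paper offers no proof of its own, citing exactly the Devinatz--Hopkins-type results (the equivalence $E_2 \wedge E_2^{hH} \simeq \Map_{cts}(\G_2/H,E_2)$ for closed subgroups $H$) that you invoke, together with the identification $T_i \simeq E_2^{hG_i}$ from the preceding theorem. Your added bookkeeping --- checking coset-invariance via the homotopy-triviality of $\psi^h$ on the image of $E_2^{hG_i} \to E_2$ for $h \in G_i$, and matching the explicit evaluation map with the abstract isomorphism --- is exactly the content the paper leaves implicit.
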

In this notation, the operations satisfy
\begin{eqnarray*}
(1\wedge \psi^\nu)f (g)& =& f(g \nu )\\
( \psi^\nu \wedge 1 )f (g)& =& \psi^\nu  f(\nu^{-1}g  )
\end{eqnarray*}

Locally at the prime 2, for the theory $T_1$ it has been shown in \cite{MR3471093} that there are unique classes $p_i\in T_1^{4i}BSpin$ with the following property: the formal series  $p_t=1+p_1t+p_2t^2\ldots$ is given by $$ \prod_{i=1}^m (1+t\rho^*(x_i \overline{x}_i))$$
when restricted to the classifying space of each maximal torus of $Spin(2m)$. Here, $\rho$ is the map to the standard maximal torus of $SO(2m)$ and the $x_i$ (and  $\overline{x}_i$) are the first $T_1$-Chern classes of the canonical  line bundles $L_i$ (resp.\ $\overline{L}_i$) induced by the circle factors of the torus. 
The classes $p_i$ freely generate  the $T_1$ cohomology of $BSpin$, that is, 
$$ T_1^*BSpin\cong T_1^*[\! [p_1,p_2,\ldots ]\!].$$
It was shown in \cite{MR3471093} that the Kitchloo-Laures-Wilson sequence implies  an isomorphism of algebras
$$ {T}_1^*BString\cong {T}_1^*[\! [\tilde{r},p_1,p_2,\ldots ]\!] $$
where $p_1,p_2,\ldots$ are the Pontryagin classes coming from $BSpin$ and $\tilde{r}$ we be explained in more details below. 
\par
In \cite{MR3448393} the result has been used to study the $T_0$ cohomology of $BString$ using equivariant methods. The theory $T_1$ admits the structure of a Real spectrum in the sense of Atiyah, Kriz, Hu et al.  (compare \cite{MR0206940}\cite{MR1808224}).
This
means that there is a $\Z/2$-equivariant spectrum (``the Real theory'') whose non-equivariant restriction (``the complex theory'') is $T_1$
and whose fixed point spectrum (``the real theory'') is $T_0$. 
They were used in \cite{MR3448393} to show that there are classes  $\pi _i\in T_0^{-32i}BSpin$ which lift the products $a_3^{6i}p_i$ of the $T_1$-Pontryagin classes $p_i$ and the (invertible) $\Gamma_1(3)$-modulare form $a_3$. Moreover, we have an isomorphism 
$$ T_0^{*}BSpin\cong T_0^*[\![ \pi _1,\pi _2,\ldots]\!]$$
In order to obtain a class $r$ which is already defined in the real theory $T_0$ one has to provide a more geometric construction. Here, the theory of cubical structures on elliptic curves comes in and furnishes a construction of the  Witten orientation in \cite{ MR1869850}. It turns out that a convenient choice of a generator is possible in the connective cover $BU\langle 6 \rangle$ of $BU$ where we define
$$r_U:=\frac{re^*\sigma}{x}\in T_1^*BU\langle 6\rangle$$ as the difference class which compares the Witten orientation $$MU\langle 6 \rangle\stackrel{re}{\to}MString\stackrel{\sigma}{\to} T_1$$ with the complex orientation
$x\in T_1^*MU\langle 6\rangle$ described in \cite[p513f]{MR3448393}. 
For each stable complex vector bundle over a finite CW-complex $X$ equipped with a lift of its structure map $X\to BU$ to $\xi:X\to BU\langle 6\rangle$, we obtain a characteristic class 
$r_U(\xi)\in T_1^*X$ by pulling back $r_U$.

The space $BU\langle 6\rangle$ has an $H$-space structure induced from $BU$, and the inclusion of the homotopy fiber of $BU\langle 6 \rangle \to BSU$ is an $H$-space map $j:K(\Z,3)\to BU\langle 6\rangle$.
For two bundles $\xi:X\to BU\langle 6\rangle$, $\eta:X\to BU\langle 6\rangle$, we obtain a direct sum $\xi\oplus \eta:X\to BU\langle 6\rangle$ such that
$$r_U(\xi\oplus\eta)=r_U(\xi)r_U(\eta),$$
since both Thom classes are multiplicative. When restricted to $K(\Z,3)$ the class $\tilde{r}_K:=\tilde{r}_U(j)\in \tilde{T}_1^*K(\Z,3)$ generates the cohomology topologically
\begin{eqnarray}\label{CohKZ3}
T_1^*K(\Z ,3)& \cong & T_1^*[\![\tilde{r}_K]\!].
\end{eqnarray}

\begin{remark}
The class $r_K$ can be described in a second way, which was suggested to the authors by Thomas Nikolaus:
in the diagram
\begin{eqnarray}\label{}
&\xymatrix{
K(\Z,3)
\ar[r]^j\ar[d]^{=}
&
BU\langle 6 \rangle
\ar[r]\ar[d]^{re}
&
BSU\ar[d]^{re}
\\
K(\Z,3)
\ar[r]^i
&
BString 
\ar[r]
&
BSpin,
}&
\end{eqnarray}
pull the universal bundle over $BSpin$ back and take Thom spaces everywhere to obtain  
\begin{eqnarray}\label{}
&\xymatrix{
K(\Z,3)_+
\ar[r]^{Th(j)}\ar[d]^{=}
&
MU\langle 6 \rangle
\ar[r]\ar[d]^{re}
&
MSU\ar[d]^{re}
\\
K(\Z,3)_+
\ar[r]^{Th(i)}
&
MString 
\ar[r]
&
MSpin,
}&
\end{eqnarray}
Here the composition of the complex orientation with $Th(j)$ factors through $MSU$, so $K(\Z,3)_+\to MU\langle 6 \rangle \stackrel{x}{\to}  T_1$ is the unit element in $T_1^*K(\Z,3)$.
Thus the composition $$K(\Z,3)_+\to MU\langle 6 \rangle\stackrel{re}{\to}MString\stackrel{\sigma}{\to} tmf\to T_0\to T_1$$ is equal to the pull-back $r_K=j^*\frac{re^*\sigma }{x}$: 
$K(\Z,3)_+\to BU\langle 6\rangle\to T_1$.
In particular we see a natural lift of $r_K$ to a class in $T_0^*K(\Z,3)$, and in fact even a lift to $tmf^*K(\Z,3)$.
Note that
 \begin{eqnarray}\label{Thomclass}
 Th(i)^*\sigma&=&Th(j)^*(re^*\sigma)=r_K.
\end{eqnarray}
\end{remark}

The complex conjugation on $BU$ induces an involution on $BU\langle 6\rangle$. We denote the composition of $\xi$ with this complex conjugation on $BU\langle 6\rangle$ by $\overline{\xi}$. We write $\psi^{-1}$ for the standard involution on $T_1$ which changes the Euler class $x$ of a line bunlde to $[-1](x)$. In \cite{MR3448393} it is shown that $r_U$ can be turned into a  $\Z/2$-equivariant map.
It follows that $$r_U(\overline{\xi})=\psi^{-1}(r_U(\xi)).$$

\bigskip

Since $c: BString\to BU\langle 6\rangle$ maps to the fixed points of complex conjugation on $BU\langle 6\rangle$, the pull-back $r=r_U(c)$ admits a lift to the fixed point spectrum $T_0$ of $\psi^{-1}$.
By pulling back $r$ along the classifying map, there is a natural stable class $$r(\xi) \in T_0^0 X$$  for every string bundle $\xi$ over $X$. 
\begin{thm}\cite{MR3448393}\label{2.4}
The class $r$ has 
the following properties:
\begin{enumerate}
\item
$r$ is multiplicative: $r(\xi\oplus \eta)=r(\xi)\otimes r(\eta)$.
\item
There is an isomorphism
$$ {T_0}^*[\![\tilde{r}, \pi _1,\pi _2,\ldots]\!] \lra {T_0}^*BString$$
where  $\tilde{r}=r-1$ is the reduced version of the class $r$ corresponding to the universal bundle over $BString$.
\item
In terms of the Chern character of its
elliptic character $\lambda$ (cf.\cite{MR1022688}) at the cusp $\infty$, it is given by the formula
$$ ch(\lambda (r_U(\xi))) = \prod_i\frac{\Phi(\tau,x_i-\omega)}{\Phi(\tau,-\omega)}.
$$
Here, the $x_i$'s are the formal Chern roots of $\xi\otimes \C$, $\omega =2\pi i/3$ and $\Phi$ is the theta function
\begin{eqnarray*}
\Phi(\tau,x)&=&(e^{x/2}-e^{-x/2})\prod_{n=1}^\infty \frac{(1-q^ne^x)(1-q^ne^{-x})}{(1-q^n)^2}
\\&=&x\, \exp(-\sum_{k=1}^\infty \frac{2}{(2k)!}G_{2k}(\tau)x^{2k}).
\end{eqnarray*}\end{enumerate}
\end{thm}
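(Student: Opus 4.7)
The plan is to derive the three parts from the explicit definition $r_U = re^*\sigma/x$ together with descent from the complex theory $T_1$ to the Real/fixed-point theory $T_0$.

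For (i), multiplicativity is inherited from the multiplicativity of both orientations. Under the equivalence $\Th(\xi \oplus \eta) \simeq \Th(\xi) \wedge \Th(\eta)$, the Witten Thom class $\sigma$ and the complex Thom class $x$ each split as tensor products, and hence so does their ratio, giving $r_U(\xi \oplus \eta) = r_U(\xi)\, r_U(\eta)$. Restricting along the complexification $c: BString \to BU\langle 6\rangle$, which is an $H$-space map, yields multiplicativity of $r$.

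For (ii), I would combine the previously established isomorphisms $T_0^*BSpin \cong T_0^*[\![\pi_1,\pi_2,\ldots]\!]$ and $T_1^*BString \cong T_1^*[\![\tilde r, p_1, p_2,\ldots]\!]$ with the fibration $K(\Z,3) \to BString \to BSpin$. Running the Eilenberg--Moore (or Atiyah--Hirzebruch) spectral sequence in $T_0$-cohomology, the $E_2$-page is a completed polynomial algebra on generators lifting the $\pi_i$ from the base and on $\tilde r_K$ from the fiber; the latter is available in $T_0$ by the remark preceding the statement. To conclude, one constructs the natural map $T_0^*[\![\tilde r, \pi_1, \pi_2, \ldots]\!] \to T_0^*BString$ sending the generators to the classes $\tilde r$ and $\pi_i$, and checks it is an isomorphism either by a Leray--Hirsch type argument (the $\pi_i$ restrict to a topological module basis on the base, and $\tilde r_K$ does so on the fiber) or by inverting the invertible modular form $a_3$ and comparing with the known $T_1$-answer via extension of scalars.

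For (iii), the Chern character computation uses the cubical-structure construction of the Witten orientation from \cite{MR1869850}. For a complex bundle with formal Chern roots $x_i$, the composite $\lambda \circ \sigma$ evaluated on the Thom class sends the standard generator to a product of normalized sigma-values $\prod_i \Phi(\tau, x_i)$, while the complex orientation $x$ on $MU\langle 6\rangle$ contributes $\prod_i x_i$. Dividing one by the other and normalizing at the chosen base point $-\omega \in \C/\Lambda$ (needed to identify a trivialization of the cubical structure on the elliptic curve used to define $\sigma$) produces the shift $x_i \mapsto x_i - \omega$ and the denominator $\Phi(\tau, -\omega)$ in the stated formula.

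The main obstacle will be (ii): verifying that $r_K$ not only lifts to $T_0$ but also topologically generates the fiber cohomology of $BString \to BSpin$ in $T_0$-theory, with no hidden extensions in the spectral sequence. This requires a careful use of the Real-spectrum structure of $T_1$ and the invertibility of the modular form $a_3$ to transport the freeness statement from $T_1^*BString$ to $T_0^*BString$, which is why the corrected generators $\pi_i$, rather than naive lifts of $p_i$, must be used.
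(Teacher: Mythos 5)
The paper itself gives no proof of this theorem: it is imported verbatim from \cite{MR3448393}, so your proposal has to be measured against the arguments there. Part (i) is fine and matches the standard argument (both Thom classes are multiplicative, hence so is their ratio, and $r=c^*r_U$). For part (ii), however, your two suggested ways of closing the argument do not suffice. A Leray--Hirsch/Atiyah--Hirzebruch argument over $BSpin$ needs as input that $T_0^*K(\Z,3)$ is a power series ring on a class restricting to $\tilde r_K$ and that the module structure over $T_0^*BSpin$ is free --- which is precisely the point at issue, not something you may feed into the spectral sequence. And the fallback of ``inverting the invertible modular form $a_3$ and comparing with the known $T_1$-answer via extension of scalars'' cannot work as stated: $a_3$ is a $\Gamma_1(3)$-modular form and is not a class in $T_0$ (only the twisted lifts $\pi_i$ of $a_3^{6i}p_i$ exist there), and the map $T_0^*\to T_1^*$ is far from injective at the prime $2$ (there is $2$-torsion in $T_0^*$ that dies in $T_1^*$), so freeness over $T_0^*$ cannot be detected after base change to $T_1^*$. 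The proof in \cite{MR3448393} instead exploits the Real ($\Z/2$-equivariant) refinement of $T_1$: one runs fixed-point/Tate-type spectral sequences for $BString$, $BSpin$ and $K(\Z,3)$ and shows they are free modules over the corresponding spectral sequence for a point, which is exactly the ``no hidden extensions'' step your sketch leaves open.

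For part (iii) there is a genuine gap in where the shift by $\omega$ comes from. The cubical-structure construction of the Witten class involves no choice of base point $-\omega$; per Chern root it contributes the factor $x_i/\Phi(\tau,x_i)$. The constant $\omega=2\pi i/3$ enters through the elliptic character of the specific complex orientation of $T_1=TMF_1(3)$ described in \cite[p.~513f]{MR3448393}, whose coordinate is attached to the canonical point of order $3$ of the curve (a $3$-torsion point of the Tate curve, hence the value $2\pi i/3$). If, as in your sketch, the complex orientation is recorded only as contributing $\prod_i x_i$, the quotient would give $\prod_i \Phi(\tau,x_i)/x_i$ up to normalization, not the stated $\prod_i \Phi(\tau,x_i-\omega)/\Phi(\tau,-\omega)$. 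A correct proof must first compute the character of that $\Gamma_1(3)$-orientation and only then divide the two Thom classes; this computation, not a normalization convention for the cubical structure, is the source of the shift $x_i\mapsto x_i-\omega$ and of the denominator $\Phi(\tau,-\omega)$.
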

\begin{remark}
Since the diagram
\begin{eqnarray}\label{}
&\xymatrix{
BU\langle 6 \rangle
\ar[r]^-{(1,conj)}\ar[d]^{re}
&
BU\langle 6 \rangle\times BU\langle 6 \rangle
\ar[d]^\mu
\\
BString 
\ar[r]^c
&
BU\langle 6 \rangle
}&
\end{eqnarray}
commutes,
we obtain the formulas
\begin{eqnarray}
r_U(c\circ re)&=&r(re)=r_U\cdot \psi^{-1}r_U, \label{core}\\
r(i)&=&r(re\circ j)=r_U(j)\cdot \psi^{-1}r_K=r_K^2,
\end{eqnarray}
where the last equality holds since $r_K$ comes from a $T_0$-cohomology class and is thus $\Z/2$-invariant.
\end{remark}
\begin{thm}
\begin{enumerate}
\item
Let  $ b_i \in \pi_{2i}( E_2 \wedge BS^1)$ be the dual to the power ${c_1^i}$ of the first Chern class of the canonical line bundle. Denote its image  under the map induced by the inclusion of the maximal torus
$$\xymatrix{\iota: BS^1\ar[r]& BSpin(3)\ar[r]&BSpin}$$
by the same name. Then 
$$
\pi_*E_2\wedge BSpin_+ \cong \pi_*E_2 [b_{8}, b_{16},b_{32},\ldots].
$$
\item
Denote a lift of $b_i $ to $\pi_{2i} (E_2\wedge BString)$  by $a_i$. Then we have
$$ \pi_*E_2\wedge BString_+ \cong \pi_*(E_2\wedge K(\Z,3)) [a_{8}, a_{16},a_{32},\ldots].$$
\end{enumerate}
\end{thm}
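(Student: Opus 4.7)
The plan is to deduce (ii) from (i) via a Serre spectral sequence argument for the fibration $K(\Z,3) \to BString \to BSpin$, so the main content lies in (i).

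For (i), I would begin from the cohomology calculation $T_1^*BSpin \cong T_1^*[\! [p_1, p_2, \ldots]\! ]$ recalled in Section~2. Passing to the $K(2)$-local category via $L_{K(2)} T_1 \cong E_2$ and dualizing against the Whitney sum Hopf algebra structure exhibits $\pi_*(E_2 \wedge BSpin_+)$ as a commutative $\pi_* E_2$-algebra, and it remains to produce a polynomial presentation. The generators are identified via the maximal torus inclusion $\iota: BS^1 \to BSpin(3) \to BSpin$, for which $\iota^*p_1 = x\bar x = x\cdot [-1]_F(x)$ and $\iota^* p_i = 0$ for $i \geq 2$, so the image of $b_i$ in $\pi_*(E_2 \wedge BSpin_+)$ pairs non-trivially only with monomials in $p_1$.

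The key input specific to height~$2$ at the prime~$2$ is the Honda relation $[2](x) \equiv x^4 \pmod{(2, u_1)}$, which iterates to $[2^k](x) \equiv x^{4^k}$ and, combined with $[-1]_F(x) \equiv -x$, propagates into relations among the Pontryagin classes in the $K(2)$-local setting. A Nakayama-style argument applied to the reduction mod the maximal ideal, over the local ring $\pi_0 E_2 = \W(\F_4)[\! [u_1]\! ]$, then singles out $b_{2^k}$ for $k \geq 3$ as a set of polynomial generators of $\pi_*(E_2 \wedge BSpin_+)$; the $b_i$ for other $i$ become expressible through the $[2]$-series from these.

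For (ii), the Serre spectral sequence of $K(\Z,3) \to BString \to BSpin$ has $E^2$-page $\pi_*(E_2 \wedge K(\Z,3)) \otimes_{\pi_* E_2} \pi_*(E_2 \wedge BSpin_+)$. Using $E_2^*K(\Z,3) \cong E_2^*[\! [\tilde r_K]\! ]$ and the fact that the fibre generator $\tilde r_K$ lives in degree $4$ while the base generators $b_{2^k}$ live in degrees $2^{k+1} \geq 16$, there is no room for non-trivial differentials or multiplicative extensions. Choosing $a_{2^k} \in \pi_{2^{k+1}}(E_2 \wedge BString_+)$ to be any lift of $b_{2^k}$ along $BString \to BSpin$ then yields the polynomial presentation over $\pi_*(E_2 \wedge K(\Z,3))$.

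The main obstacle is the identification of generators in (i): ruling out spurious indecomposables in intermediate degrees requires the prime-$2$, height-$2$ specific calculation of $[2](x)$ and a careful Nakayama lifting from the mod-$(2,u_1)$ calculation to $\pi_*(E_2 \wedge BSpin_+)$. The corresponding analysis for $BU$ produces polynomial generators in every even degree, so this sparsification really depends on how the Spin structure interacts with the formal group law at height~$2$.
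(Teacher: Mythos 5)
Your plan for part (i) assumes exactly the hard content rather than proving it. The paper does not derive this theorem from the cohomological statement $T_1^*BSpin\cong T_1^*[\![p_1,p_2,\ldots]\!]$; it cites the known computations (Kitchloo--Laures--Wilson type results for $BSpin$ and $BString$, and Ravenel--Wilson for the Eilenberg--MacLane space), because the assertion that the Pontryagin-product algebra $\pi_*(E_2\wedge BSpin_+)$ is \emph{polynomial}, and that it is freely generated by the particular torus classes $b_{2^k}$ with $2^k\geq 8$, is a genuine Hopf-algebra computation. Knowing $\iota^*p_1=x\bar{x}$ and $\iota^*p_i=0$ for $i\geq 2$ only controls the Kronecker pairing of $\iota_*b_j$ against monomials in the $p_i$; the ring structure on homology is dual to the Whitney-sum coproduct of the $p_i$, which is governed by the formal group and which you never analyze. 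Likewise, ``the Honda relation propagates into relations among the Pontryagin classes'' and ``a Nakayama-style argument singles out $b_{2^k}$'' are not arguments: the Nakayama/pro-freeness step lets you lift a known $K(2)_*$-computation (mod $(2,u_1)$) to $E_2$, but the mod-$(2,u_1)$ answer --- evenness, polynomiality, and the identification of free generators --- is precisely the theorem of the cited literature and is not established by your sketch.

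Part (ii) has a concrete error: the Serre (Atiyah--Hirzebruch--Serre) spectral sequence of $K(\Z,3)\to BString\to BSpin$ in $E_2$-homology has $E^2=H_*(BSpin;(E_2)_*K(\Z,3))$, ordinary homology of the base with twisted coefficients, not the K\"unneth-type tensor product $\pi_*(E_2\wedge K(\Z,3))\otimes_{\pi_*E_2}\pi_*(E_2\wedge BSpin_+)$ you wrote --- the latter identification is essentially the statement to be proven. Your collapse argument by degree gaps also fails: $E_2$ is $2$-periodic, $\tilde r_K$ and its dual classes $q_k$ live in degree $0$ (not $4$; see the paper's use of $q_k\in(E_2)_0K(\Z,3)$), and $H_*(BSpin;\Z)$ has odd-degree $2$-torsion, so neither sparsity nor evenness of the $E^2$-page is available for free; moreover freeness of $\pi_*(E_2\wedge BString_+)$ as a module over $\pi_*(E_2\wedge K(\Z,3))$ and the multiplicative extension problems are left untouched. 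To make your route work you would have to reprove the Kono--Yagita/Kitchloo--Laures--Wilson and Ravenel--Wilson computations; the efficient proof is the paper's, namely to quote them.
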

\begin{proof}
This follows from \cite{MR1909866}\cite{MR2093483} and \cite{MR584466}.
\end{proof}
We want to be more specific in choice of the classes $a_i$. Let 
$$ R^*: E_2^*BString \cong E_2^*BSpin [\![ \tilde{r}]\! ] \lra E_2^*BSpin$$ be the ring homomorphism given by the constant coefficient in the power series expansion. Then dually, we get a map
$${R}_* : (E_2)_*BSpin  \lra (E_2)_*BString.$$

\begin{lemma}\label{small}
Let $a_i$ be the image of $b_i$ under this map. Then we state the equality
$$\left< a_i, c\right> = \left< b_i,  R^*(c)\right>$$
for later purposes.
\end{lemma}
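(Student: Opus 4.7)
The plan is to observe that this identity is essentially a tautology: it simply unpacks the definition of $R_*$ as the dual of $R^*$ under the Kronecker pairing.

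First I would recall the Kronecker pairing $\langle -, - \rangle \colon (E_2)_*X \otimes E_2^*X \to \pi_*E_2$ for $X = BSpin, BString$. By the preceding theorem, both $E_2^*BSpin$ and $E_2^*BString$ are completed polynomial/power series rings of finite type in each degree, so $(E_2)_*X$ identifies with the continuous dual of $E_2^*X$ with respect to the natural filtration topology on the cohomology (the filtration by powers of the augmentation ideal, together with the $\tilde{r}$-adic filtration for $BString$). The map $R^*$, being the projection onto the constant coefficient of the $\tilde{r}$-adic power series expansion, is continuous for these topologies: it factors through a quotient by a power of the ideal generated by $\tilde{r}$.

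Once continuity is in hand, $R^*$ admits a well-defined dual map
$$R_* \colon (E_2)_*BSpin \lra (E_2)_*BString,$$
and this is precisely what the excerpt means by ``dually.'' By the very definition of the dual, $R_*$ is characterised by the adjointness relation
$$\langle R_*(\alpha), c\rangle \;=\; \langle \alpha, R^*(c)\rangle$$
for every $\alpha \in (E_2)_*BSpin$ and $c \in E_2^*BString$. Specialising to $\alpha = b_i$ and invoking the definition $a_i := R_*(b_i)$ immediately gives $\langle a_i, c\rangle = \langle b_i, R^*(c)\rangle$, which is the content of the lemma.

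The only real obstacle is the bookkeeping required to make the duality precise in the $K(2)$-local setting—ensuring that the continuous Kronecker pairing genuinely identifies $(E_2)_*X$ with the topological dual of $E_2^*X$, and that the dual of the continuous ring homomorphism $R^*$ lands in $(E_2)_*BString$ rather than in some larger completed dual. Both points are standard for profinitely filtered finite-type cohomology of this kind, so once the framework is in place the lemma follows at once with no further computation.
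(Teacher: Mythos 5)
Your argument is correct and matches the paper's intent: the paper records this equality without proof, treating it as immediate from the definition of $R_*$ as the (continuous) dual of $R^*$ under the Kronecker pairing, which is exactly your adjointness relation $\langle R_*(\alpha),c\rangle=\langle\alpha,R^*(c)\rangle$ specialized to $\alpha=b_i$. Your additional remarks on continuity and the identification of $(E_2)_*X$ with the topological dual of $E_2^*X$ are just the standard bookkeeping the paper leaves implicit.
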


\section{The action of the Morava stabilizer group}
In this section we analyze the action of the Morava stabilizer group on  $E$-theory. 
The first three sections summarize some well known facts about the action of the Morava stabilizer group (see for example \cite[Section 4]{MR3127044} or \cite[Section 6]{MR3581316}). We denote the formal group law obtained from the coordinate $z=-\frac{x}{y}$ on $\hat{C}$ by $F$, then elements of
$\S_2=Aut(F)$ are certain power series $g(t)\in\F_4[[t]]$.

\subsection{The action on the formal group for Morava E-theory}

One can lift the curve $C$ to $$C_U:y^2+3u_1xy+(u_1^3-1)y=x^3$$ over $(E_2)_0=\W(\F_4)[[u_1]]$,
whose formal group law $F_U$ is a universal deformation of $F$. 

An automorphism of $F$ (i.e. a power series $g$ over $\F_4$) can be lifted to a power series $\tilde{g}$
with coefficients in $\W(\F_4)$. Then $$(F_U^{\tilde{g}})(x,y) = \tilde{g}^{-1}(F_U(\tilde{g}(x),\tilde{g}(y))).$$
This is isomorphic, but in general not $*$-isomorphic to $F_U$. It is a deformation of $F$,
so it is classified by a ring homomorphism 
$$h^g: (E_2)_0=\W(\F_4)[[u_1]]\to (E_2)_0=\W(\F_4)[[u_1]],$$
which is characterized by the property that $(h^g)_*F_U$ is $*$-isomorphic to $F_U^{\tilde{g}}$.

The composition $g_U\in (E_2)_*[[z]]$ of the isomorphism of formal group laws $$\tilde{g}:F_U \to F_U^{\tilde{g}}$$ with
the unique $*$-isomorphism $$F_U^{\tilde{g}} \to (h^g)_*F_U$$ is the unique lift $g_U$ of $g$ such that $F_U^{g_U}$ 
can be obtained by pushing forward $F_U$ by a ring isomorphism, namely $h^g$. 

It follows that for each $g\in \S_2$ we have a commutative square
\begin{eqnarray*}\label{}
&\xymatrix{
\hat{C}_U \ar[d]\ar[r]^-{g_U}
&
\hat{C}_U\ar[d]
\\
Spec\ (E_2)_0
\ar[r]^{h^g}
&
Spec\ (E_2)_0
}
\end{eqnarray*} 

We denote $$g_U(z)=t_0(g)z+t_1(g)z^2+t_2(g)z^3+\dots$$

\subsection{The action on Morava E-theory}
We are now prepared to describe the action on the coefficients $(E_2)_*$. We are forced to extend $h^g:(E_2)_0\to (E_2)_0$ by
$$
h^g (u) = t_0(g)u
$$
to an automorphism $h^g$ of $(E_2)_*$.

In order to describe the action 
of $\S_2$ on $(E_2)_*X$ we briefly leave the $K(2)$-local category. Recall that $E_2$ is a Landweber exact theory and hence
$$(E_2)_*X = (E_2)_*\otimes_{MU_*} MU_*X.$$ 
Here, the $MU_*$-module structure of $(E_2)_*$ comes along as follows: the ring $MU_*$ carries the universal graded fomal group law and there is  a graded formal group law $\overline{F}_U$ over $(E_2)_*$ defined by the equation
$$\overline{F}_U(x,y)=u^{-1}F_U(ux,uy).$$
The isomorphism $g_U:F_U \to (h^g)_*F_U$ induces an isomorphism $\overline{g}_U:\overline{F}_U \to (h^g)_*\overline{F}_U$ of 
graded formal group laws via
$$\overline{g}_U(z)=(h^g(u))^{-1}g_U(uz).$$ 
This
is classified by a ring homomorphism $\phi: MU_*MU\to (E_2)_*$ sending $t_i\in MU_*MU$ to the $(i+1)$-st coefficient
$t_i(g)t_0(g)^{-1}u^{i}\in (E_2)_{2-2i}$ of $\overline{g}_U$. 

The map $\phi\eta_L:MU_*\to E_2$ classifies $F_U$ and $\phi\eta_R:MU_*\to E_2$ classifies $(h^g)_*F_U$.
Using the $MU_*MU$-coaction $$\psi:MU_*X\to MU_*MU\otimes_{MU_*} MU_*X,$$ the action of $g$ on $(E_2)_*X$ is now given as
\begin{align*}
(E_2)_*\ ^{\phi\eta_L}\otimes_{MU_*} MU_*X \stackrel{h^g\otimes \psi}\longrightarrow (E_2)_*\ ^{\phi\eta_R}\otimes_{MU_*}^{\phi\eta_R} MU_*MU\otimes_{MU_*} MU_*X \\
\stackrel{1\otimes \phi\otimes 1}\longrightarrow
(E_2)_*\ ^{\phi\eta_R}\otimes_{MU_*}^{\phi\eta_R} (E_2)_*\ ^{\phi\eta_L}\otimes_{MU_*} MU_*X \stackrel{m\cdot 1}\longrightarrow (E_2)_*\ ^{\phi\eta_L}\otimes_{MU_*} MU_*X.
\end{align*}
We have defined, for each $g\in \S_2$, a natural transformation $(E_2)_*(-)\to (E_2)_*(-)$ of homology theories, and therefore
a map of spectra $E_2\stackrel{\psi^g}\to E_2$ up to homotopy as there are no phantom maps (see for example \cite[Theorem 4c]{MR1235295}). 

Vice versa the action of the stabilizer group on $E_2$-homology is given by sending a homology class
$x\in (E_2)_*X$, i.e. $S\stackrel{x}\to E_2\wedge X$ to $\psi^g(x)$ defined as $S\stackrel{x}\to E_2\wedge X\stackrel{\psi^g\wedge 1}\to E_2\wedge X$,
and the action on a cohomology class $X\to E_2$ is given by composition with $\psi^g:E_2 \to E_2$.
This has the property that with the Kronecker pairing $E_2^*X\times (E_2)_*X \to (E_2)_*$ we have
$$\psi^g\langle b,x\rangle =\langle  \psi^g b,  \psi^g x \rangle.$$

\subsection{The action on the $E$-Euler class}

The action of $\S_2=Aut(\hat{C})$ on the complex coordinate $x\in (E_2)^2\C P^\infty$
is given by $$g\cdot x=\overline{g}_U(x)=\sum_{i\ge 0} t_i(g)t_0(g)^{-1}u^{i}x^{i+1}.$$
For $z=ux\in (E_2)^0\C P^\infty$ we have $g\cdot z =g_U(z)$.

Beaudry shows in \cite{MR3450774} that $$g\cdot u_1=t_0(g)u_1+\frac{2t_1(g)}{3t_0(g)}.$$
The element $\omega$ 
corresponds to the power series $g(t)=\alpha t$, which one can lift to $\tilde{g}(t)=\omega t$.
The automorphism $h^\omega$ defined by 
\begin{align*}
\omega\cdot u_1&=\omega u_1
\end{align*}
satisfies $(h^{\omega})_*\tilde{F}=\tilde{F}^{\tilde{g}}$, i.e. the push-forward is in this case not only $*$-isomorphic, but equal to $\tilde{F}^{\tilde{g}}$.
In particular we have 
\begin{align*}
\omega\cdot u &=\omega u,\\ 
\omega\cdot x &= x. 
\end{align*}

Strickland  uses the compatibility of the $GL_2(\Z/3)$-action on $T^*(\C P^\infty)$ with the $\G_2$-action on $E_2^*\C P^\infty$
to prove that \cite[Section 2.4]{MR3450774}
\begin{align*}
i\cdot u     &=  u \frac{-1-2\omega}{u_1-1},\\ 
i\cdot u_1 & = \frac{u_1+2}{u_1-1}.
\end{align*}

Moreover we have $$i\cdot z=\frac{lz+rlw(z)}{1+sz+l^3(sr-t)w(z)},$$
where
$$l=\frac{-1-2\omega}{u_1-1},\quad r=3\frac{1-u_1^3}{(u_1-1)^3}, \quad s=3\frac{\omega^2u_1-1}{u_1-1}, \quad 
t=3\frac{u_1^3-1}{(u_1-1)^4}\left(1-\omega+(1-\omega^2)u_1\right)$$
and $w(z)\in(E_2)_*[[z]]$ is the power series with leading term $z^3$ given by solving 
$$w+3u_1zw+(u_1^3-1)w^2=z^3$$
for $w$.

The action of the Frobenius is trivial on $u,u_1$, but is the conjugation on $\W(\F_4)$.

\subsection{The action on the $E$-cohomology of $K(\Z,3)$}

After the description of the operation on a complex coordinate in $E_2^*K(\Z,2)$, we can address the more complicated
operation on the generator of $E_2^*K(\Z,3)$ (see Equation \ref{CohKZ3}). The following result is related to \cite{Pet11}  and \cite[Theorem 3.21]{W13} where the analogous formula for the homology is proven. The result probably can also be recovered from \cite{HL13}.

\begin{thm} \label{action K}The $\S_2$-action on
$E_2^*K(\Z,3)= E_2^*[\! [\tilde{r}_K]\! ] $
is given by 
\begin{eqnarray}\label{FKZ3}
\psi^g (r_K) & = &r_K^{\det(g)}
\end{eqnarray}
where $\det$ is the determinant. 

\end{thm}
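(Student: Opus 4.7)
The approach is to exploit the Hopf-algebra structure on $E_2^*K(\Z,3)$. The multiplicativity $r_U(\xi\oplus\eta)=r_U(\xi)\,r_U(\eta)$ noted before Theorem~\ref{2.4}, together with the fact that $j\colon K(\Z,3)\to BU\langle 6\rangle$ is an $H$-map, gives $\mu^{*}(r_K)=r_K\otimes r_K$; that is, $r_K$ is grouplike. Since the $\S_2$-action on $E_2$ is by ring automorphisms and the $H$-space structure on $K(\Z,3)$ is natural, each $\psi^g$ is a Hopf-algebra automorphism of $E_2^*K(\Z,3)=E_2^*[\![\tilde r_K]\!]$, and consequently $\psi^g(r_K)$ is again grouplike.

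The next step is to identify the grouplike units of $E_2^*[\![\tilde r_K]\!]$. The condition $\Delta(r_K)=r_K\otimes r_K$ forces the coproduct to be the formal multiplicative one, so the grouplike units are precisely the binomial powers
\[
r_K^a \;=\; \sum_{n\ge 0}\binom{a}{n}\tilde r_K^n, \qquad a\in \Z_2.
\]
Consequently there is a unique continuous character $\chi\colon\S_2\to\Z_2^{\times}$ with $\psi^g(r_K)=r_K^{\chi(g)}$, and the theorem reduces to $\chi=\det$.

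A first input towards $\chi=\det$ comes from the Remark following Theorem~\ref{2.4}, which supplies a lift of $r_K$ to $tmf^*K(\Z,3)$: via the Goerss--Hopkins--Miller isomorphism $L_{K(2)}tmf\simeq E_2^{hG}$ the image of $r_K$ in $E_2^*K(\Z,3)$ is $G$-invariant, and restricting to $\S_2\cap G=G_{24}$ --- whose elements all have reduced norm $1$ --- gives $\chi|_{G_{24}}\equiv 1=\det|_{G_{24}}$. Since $G_{24}$ is a large finite subgroup of $\S_2$, this already reduces the identification of $\chi$ with $\det$ to checking agreement on one further element $g\in\S_2$ with $\det g\neq 1$.

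The main obstacle is this last computation, for example with $g=1+2i$, where $\det g=5$. For this I would combine the elliptic character formula
\[
ch\bigl(\lambda(r_U(L))\bigr)\;=\;\frac{\Phi(\tau,x-\omega)}{\Phi(\tau,-\omega)}
\]
of Theorem~\ref{2.4}(iii) with the explicit description in Sections~3.1--3.3 of the $g$-action on the coordinate $x$ and on the parameter $u_1$, via the lifted power series $g_U$ and the deformation map $h^g$. The transformation law of the theta function $\Phi$ under the corresponding substitution should produce a factor of $r_U(L)^{\det g}$ modulo terms that die upon restriction along $j\colon K(\Z,3)\to BU\langle 6\rangle$ (using that $j$ kills classes pulled back from $BSU$), yielding the required identity $\psi^g(r_K)=r_K^{\det g}$. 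This mirrors, by an independent theta-function route, the homology calculations of Peterson \cite{Pet11} and Westerland \cite{W13}.
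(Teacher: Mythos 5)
The first half of your argument---$r_K$ is grouplike, the grouplike elements of $E_2^*[\![\tilde r_K]\!]$ are the binomial powers with $2$-adic exponent, hence $\psi^g(r_K)=r_K^{\chi(g)}$ for a continuous character $\chi\colon\S_2\to\Z_2^\times$---is exactly the paper's own reduction (its lemmas on the functional equation $q(t_1)q(t_2)=q(t_1+t_2+t_1t_2)$ and on integrality of the exponent), and your $tmf$-lift observation giving $\chi|_{G_{24}}=1$ is a legitimate variant of Lemma \ref{Omegaact}. The gap is in the identification $\chi=\det$, which is the real content of the theorem. Your reduction ``to one further element'' is not valid: $\chi$ and $\det$ are continuous characters, so checking them on $G_{24}$ and on one element $g$ only forces agreement on the closed subgroup these generate, which need not be dense (by the density result of Beaudry quoted in the appendix one needs $G_{24}$, $\pi=1+2\omega$ \emph{and} $z$); and even granting that every character factors through the reduced norm, $\Z_2^\times\cong\{\pm1\}\times\Z_2$ is not procyclic, so agreement at a single $g$ with $\det g=5$ pins $\chi$ down only on the index-two subgroup $\det^{-1}(1+4\Z_2)$ and says nothing about elements of determinant $\equiv 3 \bmod 4$.

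More seriously, the proposed computation of $\chi(g)$ for a concrete $g$ is not viable as sketched. The formula of Theorem \ref{2.4}(iii) computes the Chern character of the elliptic character at the cusp, i.e.\ a rational, ordinary-locus invariant, whereas $\psi^g$ for a general $g\in\S_2$ is defined only on the supersingular deformation $E_2$ through $h^g$ and $g_U$; such a $g$ has no action on $q$-expansions, and there is no identification of the deformation parameters $u,u_1$ with modular coordinates from which a ``transformation law of $\Phi$'' could be read off. (Moreover, restriction along $j$ kills all even-degree rational characteristic classes, so the rational formula cannot detect $r_K$ at all.) The paper closes exactly this gap differently: it reduces the claim modulo the maximal ideal (Lemma \ref{MKZ3}), translates it via Ravenel--Wilson into a statement about the bilinear alternating pairing $f\colon G[n]\times G[n]\to spf(E_2^0K(\Z,3))$ (Lemma \ref{3equ}), extends the action over the residue field to the full endomorphism ring $\Z_2[\omega]\langle S\rangle/(S^2=2,\ Sa=\overline a S)$, and then verifies $f(x,xS)g=f(x,xS)^{\det g}$ by a direct computation with the reduced norm $\det(a+bS)=a\overline a-2b\overline b$. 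Some argument of this kind (or an appeal to Peterson, Westerland, or Hopkins--Lurie) is needed where your sketch currently stands.
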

The proof will be provided in several steps. We will write $R$ for $\pi_0(E_2)$, $\pi$ for its maximal ideal and $k$ for the residue field $R/\pi$.
\begin{lemma}\label{Omegaact}
The Formula \ref{FKZ3} holds for $g=\omega$. In other words, $\psi^\omega$ acts trivially on $r_K$.
\end{lemma}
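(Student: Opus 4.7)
The plan is to exploit the definition $r_U = \frac{re^*\sigma}{x} \in E_2^{*}BU\langle 6\rangle$ recorded earlier in the paper, where $\sigma : MString \to T_1$ is the Witten orientation and $x : MU\langle 6\rangle \to E_2$ is the complex orientation. Since $r_K = j^{*}r_U$, it suffices to show that $\psi^{\omega}r_U = r_U$.

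First I would observe that both factors in the ratio are fixed by $\psi^{\omega}$, but for genuinely different reasons. For the denominator, the explicit formula $\omega \cdot x = x$ in subsection 3.3 of the present paper gives the required invariance of the complex Euler class directly; this is the concrete calculation that made $\omega$ simpler than, say, $i$. For the numerator, I would invoke Theorem 2.1 (the $K(2)$-local Goerss--Hopkins--Miller identification $L_{K(2)}T_1 \simeq E_2^{hG_1}$) together with the description of $G_1$ as the subgroup of $GL_2(\Z/3)$ generated by $\omega$ and the Frobenius. Since $\omega \in G_1$, the canonical map $T_1 \to E_2$ is $\omega$-equivariant up to canonical homotopy, and therefore the cohomology class
\[
re^{*}\sigma \;\in\; E_2^{0}MU\langle 6\rangle
\]
(which factors through $T_1$) is fixed by $\psi^{\omega}$. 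The ratio $r_U$, interpreted via the Thom isomorphism as the characteristic class relating the two Thom classes $x$ and $re^{*}\sigma$ of the complex bundle over $BU\langle 6\rangle$, is then $\omega$-invariant by naturality.

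Pulling back along $j : K(\Z,3) \to BU\langle 6\rangle$, which is $\psi^{\omega}$-equivariant because it is induced from a fixed map of spaces, yields $\psi^{\omega}r_K = r_K$. This is consistent with the statement of Theorem~\ref{action K} since $\omega$ lies in $G_{24} \cong SL_2(\Z/3)$, so $\det(\omega) = 1$ and $r_K^{\det(\omega)} = r_K$.

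The main obstacle is the careful bookkeeping in the middle step: one must make sure that the $G_1$-equivariance of $T_1 \to E_2$ in the $K(2)$-local category really does translate into invariance of the cohomology class $[re^{*}\sigma] \in E_2^{0}MU\langle 6\rangle$ under $\psi^{\omega}$. This uses the standard fact that if a map $f : X \to E_2$ factors as $X \to E_2^{hH} \to E_2$ and $h \in H$, then $\psi^{h} \circ f \simeq f$, applied with $H = G_1$, $h = \omega$, and $f = re^{*}\sigma$; the argument then goes through because division by $x$ is well-defined and natural inside $E_2^{*}BU\langle 6\rangle$ via the Thom isomorphism.
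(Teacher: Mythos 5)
Your proof is correct and follows essentially the same route as the paper: the paper likewise writes $r_U$ as the difference class of the complex and string Thom classes and notes that both are already defined in $E_2^{hG_1}=L_{K(2)}T_1$, hence fixed by $\psi^\omega$ since $\omega\in G_1$. Your only (harmless) variation is to justify the invariance of the complex Thom class via the explicit computation $\omega\cdot x=x$ from the subsection on the Euler class rather than via its definedness over $T_1$.
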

 \begin{proof}
We claim that the action of $\omega$ is already trivial on $r_U$. The class $r_U$ is the  the difference class of the complex and the string Thom classes. Each of the Thom class are already defined  in $E_2^{hG_1}$ and are hence fixed under the action of $\omega$.

\end{proof}
\begin{lemma}
There is a homomorphism $\alpha: \S_2 \lra \Z_2^\times$ with 
$$\psi^g (r_K) = r_K^{\alpha(g)}.$$
\end{lemma}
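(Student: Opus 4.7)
The plan is to use the multiplicativity of $r_K$ to identify $\mathrm{Spf}\, E_2^* K(\Z,3)$ with the multiplicative formal group $\hat{\G}_m$ over $E_2^*$, and then to recognise $\psi^g$ as acting through an automorphism of this formal group.

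First I would observe that $r_K$ is group-like with respect to the $H$-space structure on $K(\Z,3)$. Since the complex and string Thom classes are both multiplicative, one has $r_U(\xi \oplus \eta) = r_U(\xi) r_U(\eta)$; applied to the universal pair of bundles and pulled back along the $H$-map $j \colon K(\Z,3) \to BU\langle 6\rangle$, this yields $\mu^*(r_K) = r_K \otimes r_K$. In the coordinate $\tilde r_K = r_K - 1$ the coproduct on $E_2^* K(\Z,3) = E_2^*[\![\tilde r_K]\!]$ is thus the multiplicative formal group law $F(x,y) = x + y + xy$, so $\mathrm{Spf}\, E_2^* K(\Z,3) \cong \hat{\G}_m$.

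Next, naturality of the operation $\psi^g$ in the space variable gives
$$\mu^*(\psi^g r_K) \;=\; \psi^g(\mu^* r_K) \;=\; \psi^g(r_K\otimes r_K) \;=\; \psi^g r_K \,\otimes\, \psi^g r_K,$$
using that $\psi^g$ is a ring map. Hence $\psi^g r_K$ is again group-like, and since $\psi^g$ is a ring automorphism of $E_2^*[\![\tilde r_K]\!]$, the linear term of $\psi^g\tilde r_K$ is a unit in $E_2^*$. The problem thereby reduces to classifying group-like elements of $E_2^*[\![\tilde r_K]\!]$ with invertible linear coefficient. This is where the main content sits: the standard classification asserts that, over any $2$-adically complete base, $\mathrm{End}(\hat{\G}_m) \cong \Z_2$ with $c \in \Z_2$ acting through the binomial series $x \mapsto (1+x)^c - 1$, so $\mathrm{Aut}(\hat{\G}_m) \cong \Z_2^\times$; this furnishes a unique $\alpha(g) \in \Z_2^\times$ with $\psi^g r_K = r_K^{\alpha(g)}$.

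Finally, I would verify the homomorphism property: the binomial coefficients $\binom{c}{k}$ lie in $\Z \subset E_2^*$ and are fixed by every $\psi^g$, so $\psi^g(r_K^c) = (\psi^g r_K)^c$ for all $c \in \Z_2$; composing the operations then gives $r_K^{\alpha(gh)} = \psi^g(r_K^{\alpha(h)}) = r_K^{\alpha(g)\alpha(h)}$ and hence $\alpha(gh) = \alpha(g)\alpha(h)$ (the opposite composition convention produces an anti-homomorphism, but this is automatically a homomorphism because $\Z_2^\times$ is abelian). The only genuinely technical input is the $2$-adic classification of $\mathrm{End}(\hat{\G}_m)$ and the attendant convergence of the binomial series over $E_2^*$; everything else is naturality.
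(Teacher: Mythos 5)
Your argument follows the same skeleton as the paper's: multiplicativity of the Thom classes makes $r_K$ group-like for the $H$-space structure, naturality makes $\psi^g r_K$ group-like as well, and the problem reduces to classifying group-like elements of $E_2^*[\![\tilde r_K]\!]$ with the multiplicative coproduct; the homomorphism property is then formal. The one real difference is how that classification is handled. You cite it as a standard fact, stated as ``$\mathrm{End}(\hat{\G}_m)\cong\Z_2$ over any $2$-adically complete base''; as literally phrased this is too strong (over a disconnected base such as $\Z_2\times\Z_2$ the endomorphism ring is $\Z_2\times\Z_2$, and the exponent need not be a single $2$-adic integer), so you must at least invoke connectedness, or better, the specific structure of $E_2^0=\W(\F_4)[\![u_1]\!]$ as a complete local domain. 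The paper, which advertises a self-contained proof, establishes exactly this classification by hand: it shows by comparing coefficients that a solution of $q(t_1)q(t_2)=q(t_1+t_2+t_1t_2)$ is determined by its linear term, identifies it rationally with the binomial series $B_\alpha$, and then forces $\alpha\in\Z_2$ from integrality of the binomial coefficients $\binom{\alpha}{2^k}$, using that $\F_4[\![u_1]\!]$ is an integral domain. So your route buys brevity at the cost of an external input whose hypotheses need care (and whose proof, in the case at hand, is precisely the paper's argument), while the paper's version is elementary and verifiable on the spot. Two small points worth tightening: the invertibility of the linear coefficient of $\psi^g\tilde r_K$ uses that $\psi^g$ is only semilinear over $E_2^*$, so argue via $\psi^{g^{-1}}\psi^g=\mathrm{id}$ on $I/I^2$ (or simply deduce $\alpha(g)\in\Z_2^\times$ from $\alpha(g)\alpha(g^{-1})=1$ at the end); and the identity $\psi^g(r_K^c)=(\psi^g r_K)^c$ for $c\in\Z_2$ needs continuity of $\psi^g$ for the $\tilde r_K$-adic topology, which you should say explicitly.
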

\begin{proof}
The class $r_U$ is multiplicative for sums of vector bundles. Hence,
the $H$-space addition $\mu: BU\langle 6\rangle^2 \to BU\langle 6\rangle$ sends 
$$r_U\mapsto r_U\otimes r_U\in E_2^*BU\langle 6\rangle\otimes_{E_2^*}E_2^*BU\langle 6\rangle$$
in cohomology. 
Since $j:K(\Z,3)\to BU\langle 6\rangle$ is an $H$-space map which maps $r_U\mapsto r_K$ in cohomology, 
the $H$-space addition $\mu: K(\Z,3)^2\to K(\Z,3)$ sends 
$$r_K\mapsto r_K\otimes r_K\in E_2^*K(\Z,3)\otimes_{E_2^*}E_2^*K(\Z,3)$$ 
in cohomology.
Let $\psi^g(r_K)=q(\tilde{r}_K)$ for $q\in R[\! [t]\! ]$ a power series with leading term 1.
Then 
\begin{align*}
q(\tilde{r}_K)\otimes q(\tilde{r}_K) =\psi^g(r_K)\otimes\psi^g(r_K)=\psi^g(\mu^*(r_K))=\mu^*\psi^g(r_K)\\
=\mu^*(q(\tilde{r}_K))=q(\mu^*(\tilde{r}_K))=q(\tilde{r}_K\otimes 1 + 1\otimes \tilde{r}_K + \tilde{r}_K\otimes\tilde{r}_K)
\end{align*}
Thus $q(t)$ is a power series with leading term 1 satisfying 
\begin{equation} \label{powere}
q(t_1)q(t_2)=q(t_1+t_2+t_1t_2).
\end{equation}
For $\alpha\in R$ there cannot exist more than one power series with leading terms $1+\alpha t $ satisfying the equation. The proof is an easy 
induction and a comparison of  the degree $n$ coefficients on both sides of the equation.
This is true even if we invert 2, when all binomial series 
$$B_\alpha(t)=\sum{\alpha\choose n}t^n; \qquad \mbox{ } \binom{\alpha}{n}= \frac{\alpha (\alpha -1)\cdots (\alpha-n+1)}{n!}$$ for all $\alpha\in R$ satisfy this equation. 
It follows that 
$$\psi^g(r_K)=q(\tilde{r}_K)=B_\alpha(\tilde{r}_K)= (1+\tilde{r}_K)^\alpha= r_K^{\alpha}$$ for some exponent $\alpha=\alpha(g)$. Here, the third equation is by definition. \par
We still have to show that $\alpha$ already takes values in  $\Z_2^\times$. 
In order to see this, observe that all binomial coefficients $\binom{\alpha}{n}$ lie in the non rationalized coefficient ring $R$.
In particular  for $n=2$ it follows that  
$$\frac{\alpha (\alpha -1)}{2!}\in R.$$
Since $ \F_4[\! [u_1]\!]$ is an integral domain we conclude that 
$ \alpha$ coincides with an integer mod $2$. Similarly, for $n=2^2$ one obtains that $\alpha$ coincides with an integer modulo $2^2$ and so on. 
\par Since $r_K$ is multiplicative  $\alpha$ is  a homomorphism.
\end{proof}

It remains to show that this homomorphism is the determinant.
\begin{lemma}
Suppose $\alpha$ and $ \beta $ are $p$-adic integers and suppose
$$B_\alpha (t) = B_\beta  (t)\quad \mbox{mod } p$$
then $\alpha = \beta$.
\end{lemma}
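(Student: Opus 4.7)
The plan is to exploit multiplicativity $B_\alpha(t)\,B_\beta(t)=B_{\alpha+\beta}(t)$ in $\Z_p[\![t]\!]$, which follows from the Vandermonde--Chu identity $\sum_{k=0}^n\binom{\alpha}{k}\binom{\beta}{n-k}=\binom{\alpha+\beta}{n}$ (a polynomial identity in $\alpha,\beta$ over $\Q$, hence valid for all $\alpha,\beta \in \Z_p$). Setting $\gamma:=\alpha-\beta$, the hypothesis $B_\alpha\equiv B_\beta\pmod p$ translates into $B_\gamma(t)\equiv 1\pmod p$, i.e.\ $\binom{\gamma}{n}\equiv 0\pmod p$ for every $n\geq 1$. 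The statement is thus reduced to showing that this congruence forces $\gamma=0$.

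To handle the reduced statement I would argue by contradiction. Assume $\gamma\neq 0$ and set $v:=v_p(\gamma)\geq 0$. The strategy is to exhibit a single binomial coefficient $\binom{\gamma}{n}$ that is a $p$-adic unit; the natural candidate is $n=p^v$. Writing
$$\binom{\gamma}{p^v}=\frac{\gamma\,\prod_{i=1}^{p^v-1}(\gamma-i)}{p^v!},$$
the key observation is that for $1\leq i\leq p^v-1$ one has $v_p(i)<v=v_p(\gamma)$, whence by the ultrametric inequality $v_p(\gamma-i)=v_p(i)$. Summing the valuations gives
$$v_p\!\Bigl(\gamma\prod_{i=1}^{p^v-1}(\gamma-i)\Bigr)=v+v_p\bigl((p^v-1)!\bigr)=v_p(p^v!),$$
where the last equality uses $v_p(p^v)=v$. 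Consequently $v_p\bigl(\binom{\gamma}{p^v}\bigr)=0$, contradicting the assumption that it vanishes modulo $p$.

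There is essentially no serious obstacle here: the reduction is formal and the valuation bookkeeping is short. The one place where care is required is the replacement $v_p(\gamma-i)=v_p(i)$, which relies on the strict inequality $v_p(i)<v_p(\gamma)$ rather than the opposite case; picking exactly $n=p^v$ is what makes the numerator and denominator valuations cancel. A pleasant by-product of the argument is the stronger quantitative statement that $\binom{\gamma}{p^{v_p(\gamma)}}$ is always a $p$-adic unit, which produces an explicit witness rather than a mere non-vanishing comparison.
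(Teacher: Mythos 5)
Your proof is correct. It differs from the paper's argument mainly in the engine used to exhibit the nonvanishing coefficient. The paper also (implicitly) reduces to the difference $\gamma=\alpha-\beta$, writes $\gamma = p^s\gamma'$, and then works in $\F_p[\![t]\!]$, invoking the Frobenius identity $(1+t)^{p^s}\equiv 1+t^{p^s}\pmod p$ so that $B_\gamma \equiv (1+t^{p^s})^{\gamma'} = 1+\gamma' t^{p^s}+\dots$, which forces $\gamma'\equiv 0\pmod p$; iterating gives $\alpha=\beta$. You instead run a direct $p$-adic valuation count on the single coefficient $\binom{\gamma}{p^v}$ with $v=v_p(\gamma)$, using $v_p(\gamma-i)=v_p(i)$ for $1\le i\le p^v-1$ and Legendre-type bookkeeping to conclude it is a unit, contradicting $B_\gamma\equiv 1\pmod p$. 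Both proofs pinpoint the same witness exponent $n=p^{v_p(\alpha-\beta)}$; the paper's route is shorter once the mod-$p$ Frobenius identity for power series is granted, while yours is more elementary (only the Vandermonde multiplicativity and ultrametric valuation arithmetic), avoids the iterative ``gain one power of $p$'' structure by arguing at the exact valuation, and yields the sharper explicit statement that $\binom{\gamma}{p^{v_p(\gamma)}}$ is always a $p$-adic unit. One small point to make explicit in the reduction step: cancelling $B_\beta$ from $B_\beta B_\gamma\equiv B_\beta \pmod p$ uses that $B_\beta$ has constant term $1$ and is therefore invertible in $\F_p[\![t]\!]$; this is immediate but worth a clause.
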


\begin{proof}
Suppose $\alpha = \beta +p^s\gamma$ for some integer $s$ and some $p$-adic number $\gamma$. Then it sufices to show that $\gamma$ is divisible by $p$. Calculate mod $p$
$$
(1+t)^\alpha =(1+t)^\beta ((1+t)^{p^s})^\gamma =(1+t)^\beta (1+t^{p^s})^\gamma$$
and hence
$$ 1=(1+t^{p^s})^\gamma = 1+ \gamma \,  t^{p^s}+ \ldots. $$
\end{proof}
\begin{lemma}\label{MKZ3}
The Formula \ref{FKZ3}
 holds if and only if it holds modulo the maximal ideal $\pi$ of $R$.
\end{lemma}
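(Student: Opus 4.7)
The plan is to reduce both sides of Formula \ref{FKZ3} to their canonical binomial-series form and then invoke the immediately preceding lemma. By the earlier result of this section, $\psi^g(r_K) = B_{\alpha(g)}(\tilde{r}_K) = r_K^{\alpha(g)}$ for some $\alpha(g) \in \Z_2^\times$, and by definition $r_K^{\det(g)} = B_{\det(g)}(\tilde{r}_K)$. Thus Formula \ref{FKZ3} is equivalent to the equality $\alpha(g) = \det(g)$ in $\Z_2$, and the entire question has been transferred from power series in $R[\![\tilde{r}_K]\!]$ to $2$-adic exponents.

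The "only if" direction is trivial. For the "if" direction, I would assume $B_{\alpha(g)}(\tilde{r}_K) \equiv B_{\det(g)}(\tilde{r}_K) \pmod{\pi}$ in $E_2^*K(\Z,3) \cong R[\![\tilde{r}_K]\!]$. Matching coefficients of $\tilde{r}_K^n$ gives $\binom{\alpha(g)}{n} \equiv \binom{\det(g)}{n} \pmod{\pi}$ for every $n \ge 0$. Since $\alpha(g), \det(g) \in \Z_2$, these binomial coefficients already lie in $\Z_2 \subset \W(\F_4) \subset R$, and the composite $\Z_2 \hookrightarrow R \twoheadrightarrow R/\pi = \F_4$ factors through the ordinary mod-$2$ reduction $\Z_2 \twoheadrightarrow \F_2$. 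Consequently, the mod-$\pi$ hypothesis is equivalent to the coefficient-wise congruence $B_{\alpha(g)}(t) \equiv B_{\det(g)}(t) \pmod{2}$ in $\Z_2[\![t]\!]$.

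At this point the preceding lemma applies verbatim with $p = 2$, yielding $\alpha(g) = \det(g)$ in $\Z_2$ and hence Formula \ref{FKZ3} in $R[\![\tilde{r}_K]\!]$. The argument is essentially bookkeeping; the only step requiring attention is the observation that both exponents are $2$-adic integers rather than arbitrary elements of $R$, so that reduction modulo the maximal ideal $\pi = (2, u_1)$ collapses to ordinary mod-$2$ reduction on the exponents. Once this is noted, no serious obstacle remains, and the previous rigidity lemma does the remaining work.
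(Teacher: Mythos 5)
Your proof is correct and is essentially the paper's own argument: the paper simply states that the lemma is "a consequence of the previous two lemmas," and you have spelled out exactly that deduction — reducing Formula \ref{FKZ3} to the equality of the $2$-adic exponents $\alpha(g)$ and $\det(g)$ via the binomial-series lemma, and noting that reduction modulo $\pi=(2,u_1)$ restricts on $\Z_2\subset R$ to ordinary mod-$2$ reduction so that the rigidity lemma applies with $p=2$. No issues.
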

\begin{proof} This is a consequence of the previous two lemmas.
\end{proof}
In order to simplify the statement even further, consider the product 
$$\mu_n: K(\Z/n,1)\times  K(\Z/n,1) \lra K(\Z/n, 2)\stackrel{\beta}\to K(\Z,3)$$
which is derived from the product structure of the Eilenberg-MacLane spectrum and the Bockstein map.  
The map $\mu_n$  is additive in each of the two arguments and the ring 
$E_2^0(K(\Z ,3))$ supports the multiplicative formal group. This fact is not difficult to show. A proof is given in  \cite[Proposition 3.2]{W13} for Morava $K$-Theory.  
Since $ E_2^0(K(\Z ,3) $ is a deformation of $K ^0(K(\Z ,3)$ it also supports the multiplicative group.

The additivity can be reformulated in the language of formal group schemes. Let ${\cat A}dic$ be the category of augmented $R$-algebras  with nilpotent augmentation ideal. For an augmented $R$-algebra  $A$  with augmentation ideal $m$ write $spf(A)$ for the functor from ${\cat A}dic$ to ${\cat S}ets$ given by 
$$spf(A)(B)=\colim_i {\cat A}dic(A/m^i,B).$$
 Write $G[n]$ for the  the formal group scheme $spf(E_2^0(K(\Z/n ,1)))$. Then 
$\mu_n$ induces a pairing\footnote{In fact, this map induces the universal $e_n$-pairing in the sense of Ando and Strickland \cite[Proposition 2.3]{MR1791270} but we will not make use of this fact. } 
$$ f: G[n]\times G[n] \lra spf(E_2^0(K(\Z,3)))$$
with the properties 
\begin{eqnarray*}
f(x_1+x_2,y)&=&f(x_1,y)f(x_2,y),\\
f(x,y_1+y_2)&=&f(x,y_1)f(x,y_2),\\
f(x,x)&=&1.
\end{eqnarray*}

The left action of the Morava stabilizer group on the cohomology induces a right action on formal schemes via precomposition. Writing $g$ instead of $\psi^g$  we have the identity
\begin{eqnarray}
\label{ringop}
 f(xg,yg)&=&f(x,y)g.
 \end{eqnarray}
Furthermore, Lemma \ref{Omegaact} gives the equality
\begin{eqnarray}\label{trivial}
 f(x {\omega},y {\omega})&=&f(x,y).
 \end{eqnarray}

Let $A\in {\cat A}dic$ be an algebra over the residue field $k=R/\pi$. 
Then the set of $A$-points $G[n](A)$ has the form 
$$spf(E_2^0(K(\Z/n ,1)))(A)\cong spf(K^0(K(\Z/n ,1)))(A)\cong 
spf(k[\![x]\! ]/[n](x))(A).
$$ 
This observation allows us to extend the action of $\S_2$ on $G[n](A)$ to all power series which are endomorphism of $\hat{C}$. Hence, for all $x\in G[n](A)$  it holds by definition
$$ x ({g+h})=xg +xh.$$
At this point we need the well-known presentation of the endomorphism ring
$$End(\hat{C})=\Z_2[\omega]\langle S\rangle /(S^2=2,Sa=\overline{a}S)$$
where $S$ is the Frobenius and the conjugation satisfies $\bar{\omega}=\omega^2$. In this description, the determinant  takes the form
$$ \det (a+bS)=a\bar{a}-2b\bar{b}.$$

\begin{lemma}\label{3equ}
Formula  \ref{FKZ3} is equivalent to each of the following statements:
\begin{enumerate}
\item $f(x,xS)g=f(x,x S)^{\det(g)}$ for all $x\in G[n](A)$ 
\item $f(x,y)g=f(x,y)^{\det(g)}$ for all $x,y\in G[n](A)$ 
\end{enumerate}
\end{lemma}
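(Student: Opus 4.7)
The plan is to establish $\eqref{FKZ3} \Leftrightarrow (2)$ and then the equivalence $(1) \Leftrightarrow (2)$, of which $(2) \Rightarrow (1)$ is immediate by specialization to $y = xS$.

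For $\eqref{FKZ3} \Leftrightarrow (2)$: both sides of $\psi^g(r_K) = r_K^{\det(g)}$ live in the power-series ring $E_2^* K(\Z,3) \cong E_2^*[\![\tilde{r}_K]\!]$, and the formal scheme $spf(E_2^0 K(\Z,3))$ carries the multiplicative formal group with coordinate $\tilde{r}_K$, so the exponentiation $r_K^{\det(g)}$ is intrinsic to that group structure. I would pull the identity back along $\mu_n$: the class $\mu_n^*(r_K)$ is encoded by $f$, and evaluating on an $A$-point $(x,y) \in G[n](A) \times G[n](A)$ the right action of $g$ on formal schemes interprets $\psi^g(r_K)$ as $f(x,y) g$ while $r_K^{\det(g)}$ becomes $f(x,y)^{\det(g)}$ via the multiplicative group structure. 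This yields $\eqref{FKZ3} \Rightarrow (2)$ directly. For the converse, Lemma \ref{MKZ3} reduces the task to checking modulo $\pi$, and letting $n$ and the test algebra $A$ vary, the collection of $A$-points $f(x,y)$ sweeps out enough of $spf(E_2^0 K(\Z,3))$ to detect equality of power series in $\tilde{r}_K$, forcing $\eqref{FKZ3}$.

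For $(1) \Rightarrow (2)$: the plan is to combine the bilinearity and alternating property of $f$ with the compatibility $f(xg, yg) = f(x,y) g$ from \eqref{ringop}, together with the explicit presentation $End(\hat{C}) = \Z_2[\omega]\langle S \rangle / (S^2 = 2,\ Sa = \bar{a}S)$. Writing $g = a + bS$ with $a, b \in \Z_2[\omega]$, additivity of the $End(\hat{C})$-action on $A$-points gives $xg = xa + xbS$ and similarly for $yg$, and bilinearity of $f$ then expands $f(xg, yg) = f(x,y) g$ into four terms of the form $f(x\alpha, y\beta)$ with $\alpha, \beta \in \{a, bS\}$. I would reorganize these pieces using $f(z, z) = 1$, the twist $Sa = \bar{a}S$, and $S^2 = 2$ so that each contribution reduces to a pairing of the form $f(x', x'S)$ on which hypothesis (1) applies. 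The main obstacle is the exponent bookkeeping: after summing all four contributions the total exponent on $f(x, y)$ must assemble into $\det(g) = a\bar{a} - 2 b\bar{b}$, which forces essential use of the reduced-norm identity in the maximal order $\O_2$.
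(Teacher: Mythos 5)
The heart of this lemma is the implication from (i) (or (ii)) back to Formula \ref{FKZ3}, and this is exactly where your argument has a genuine gap: you assert that, as $n$, $A$ and $j$ vary, the points $f(x,y)$ ``sweep out enough of $spf(E_2^0K(\Z,3))$ to detect equality of power series in $\tilde{r}_K$,'' but you give no reason. A priori the pairing could be degenerate: if $\mu_n^*\tilde{r}_K$ were trivial, or failed to see higher powers of $\tilde{r}_K$, then your statements (1) and (2) would hold vacuously for every $g$ and could not force Formula \ref{FKZ3}. The paper's proof consists precisely of establishing this detection: by Ravenel--Wilson \cite{MR584466} the maps $K(\Z/2^\infty,s)\to K(\Z,s+1)$ are $2$-local equivalences, so $E_2^*K(\Z,3)\cong\lim_r E_2^*K(\Z/2^r,2)$ and the relevant rings are power series rings in one variable, and the limit over $r$ of the composites $E_2^*K(\Z/2^r,2)\to E_2^*K(\Z/2^r,1)\otimes E_2^*K(\Z/2^r,1)\to E_2^*K(\Z/2^r,1)$ (the last map being $(\id,S)$) is injective because $r_K$ pairs nontrivially with $\beta_2\circ\beta_1$ in Morava $K$-theory, a computation imported from \cite{MR3448393}. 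Combined with Lemma \ref{MKZ3} and the approximation argument with $A=K^0(K(\Z/2^r,1))/m^j$, this is what turns (i), resp.\ (ii), into Formula \ref{FKZ3}. Your forward direction, Formula \ref{FKZ3} implies (2), is fine in substance (two $A$-points agree because they agree on the topological generator $r_K$) and corresponds to the paper's use of the surjectivity of $K^0K(\Z,3)\to K^0K(\Z/2^r,2)$.

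Your proposed direct algebraic proof of $(1)\Rightarrow(2)$ also does not work as sketched. For independent points $x,y\in G[n](A)$, writing $g=a+bS$ and expanding $f(xg,yg)=f(x,y)g$ by bilinearity produces mixed terms such as $f(xa,ybS)$ and $f(xbS,ya)$, whose two arguments are multiples of \emph{different} points; these are not of the form $f(x',x'S)$, so hypothesis (1) cannot be applied to them, and $f(z,z)=1$ gives nothing since no argument is repeated. An arbitrary pair $(x,y)$ is not of the form $(x,xh)$ with $h\in End(\hat{C})$ (consider the universal pair of points), so there is no reduction to the diagonal case. The manipulations you describe are essentially those the paper carries out in the \emph{subsequent} lemma to prove that (i) actually holds, where the relation $y=xS$ between the arguments is essential; and even that computation bootstraps through the present equivalence for elements with integer coefficients, e.g.\ to obtain $f(x,y)(1+S)=f(x,y)^{-1}$ for all $x,y$. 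The only available route from (1) to (2) is through Formula \ref{FKZ3}, which again requires the detection step above; the paper accordingly proves both (i) and (ii) equivalent to Formula \ref{FKZ3} separately rather than deriving one from the other algebraically.
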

\begin{proof}
Ravenel and Wilson \cite{MR584466} show that the maps
$$ \colim_rK(Z/2^r, s)\simeq K(Z/2^\infty , s)\lra  K(\Z, s+1)$$
are $2$-local equivalences. It follows that the maps 
$$ E_2^* K(\Z,s+1) \lra \lim_r E_2^*K(\Z/2^r, s)$$
are isomorphisms and thus $\lim E_2^*K(\Z/2^r, 2) $ and 
$\lim  E_2^*K(\Z/2^r, 1)$ are power series rings in one variable.
Hence, in order to show that the limit of  the composites
$$ E_2^*K(\Z/2^r, 2) \stackrel{\mu^*}{\lra} E_2^*K(\Z/2^r, 1)\otimes E_2^*K(\Z/2^r, 1)
\stackrel{(id,S)}{\lra} E_2^*K(\Z/2^r, 1).$$
is an injection it suffices to show that the generator $r_K$ maps non trivially.  This can be checked in Morava $K$-theory where $r_K$ pairs non trivially with the homology class
$$\beta_2\circ (\beta_2 S)=\beta_2\circ \beta_1.$$
Here, the circle product is induced by the map $\mu$. This calculation has been carried out in \cite[p521ff]{MR3448393}. We conclude that Formula \ref{FKZ3} can be shown either in $E_2^*K(\Z/2^r, 1)$ or in $ E_2^*K(\Z/2^r, 1)\otimes E_2^*K(\Z/2^r, 1)$ for each $r$.
Moreover, we have already seen in Lemma \ref{MKZ3} that we can reduce Formula  \ref{FKZ3} modulo the ideal $\pi$.
\par
Now suppose  that  statement $(i)$ holds. Set $A=K^0(K(\Z/2^r,1))/m^j$ for some $j$ and let $x$ be the ($m^j$, $\pi$)-reduction map. Then we can apply the algebra map $f(x,xS)g$ to $r_K$ and obtain the image of $\psi^g(r_K)$ in $A$ under the map described above. Similarly, we obtain the image of $r_K^{\det(g)}$ when applying $f(x,xS)^{\det(g)}$ to $r_K$. Since these coincide for all $j$ we have shown Formula \ref{FKZ3} in $K^0K(\Z/2^r, 1)$. 
\par
We now turn to the converse. Ravenel and Wilson show that the map from  $K_0(K(\Z/2^r,2)$ to $K_0(K(\Z,3))$ is injective. Since the dual map is surjective the image of the class $r_K$ generates.  Hence, it suffices to check the stated equality $(i)$ on this single class for which it follows from Formula \ref{FKZ3}.
\par
Statement $(ii)$ is treated analogously.
 \end{proof}

In view of Lemma \ref{3equ}
 the proof of Theorem \ref{action K} follows from the subsequent lemma.
\begin{lemma}
For all $x\in G[n](A)$ and $g\in \S_2$ we have
\begin{eqnarray}\label{actionf}
f(x,xS)g&=&f(x,xS)^{\det(g)}.
\end{eqnarray}
\end{lemma}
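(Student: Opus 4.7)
The plan is to verify the identity $f(x,xS)\cdot g = f(x,xS)^{\det(g)}$ by computing both sides directly.  Using equation~(\ref{ringop}) to rewrite the left-hand side as $f(xg,xSg)$, I will expand the right-hand side via bilinearity.  Every $g\in\S_2$ admits a unique normal form $g=a+bS$ with $a\in\Z_2[\omega]^\times$ and $b\in\Z_2[\omega]$; writing $a=a_0+a_1\omega$ and $b=b_0+b_1\omega$, the relations $\omega S=S\omega^2$ and $S^2=2$ in $\O_2$ let me express both $xg$ and $xSg=x(Sg)$ as explicit $\Z_2$-linear combinations of the four vectors $u=x$, $u'=x\omega$, $v=xS$, $v'=xS\omega$.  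The bilinear expansion then reads $f(xg,xSg)=\prod_{i<j}M_{ij}^{\alpha_i\beta_j-\alpha_j\beta_i}$, with $M_{ij}=f(e_i,e_j)$ on the ordered basis $(e_1,e_2,e_3,e_4)=(u,u',v,v')$ and $\alpha_i,\beta_j$ the coefficients of $xg$ and $xSg$.

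To collapse this product I use three identities.  First, $M_{13}=M_{24}=:C=f(x,xS)$, which is immediate from the $\omega$-invariance of Lemma~\ref{Omegaact}.  Second, $M_{14}\cdot M_{23}=C^{-1}$: expanding the invariance $f(x\omega^2,xS\omega^2)=f(x,xS)$ via $\omega^2=-1-\omega$ and bilinearity gives $C^2\cdot M_{14}M_{23}=C$.  Third, the Frobenius identity $M_{34}=M_{12}^2$, i.e.\ $f(xS,xS\omega)=f(x,x\omega)^2$.  Substituting all three, the $C$-exponents from pairs $(1,3)$ and $(2,4)$ together with the $C^{-1}$ coming from $(M_{14}M_{23})^{\xi}$ combine to the polynomial $a_0^2-a_0a_1+a_1^2-2(b_0^2-b_0b_1+b_1^2)=a\bar a-2b\bar b=\det(g)$, while the $(1,2)$ and $(3,4)$ contributions are inverse powers of $M_{12}^2=M_{34}$ and cancel exactly.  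This yields $f(xg,xSg)=C^{\det(g)}$ as required.

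The main obstacle is the third identity.  It is the instance of Frobenius sesquilinearity $f(xS,y)\cdot f(x,yS)=1$ applied at $y=xS\omega$: using $(xS\omega)S=xS^2\omega^2=2x\omega^2=-2(x+x\omega)$, one computes $f(x,(xS\omega)S)=f(x,x\omega)^{-2}=M_{12}^{-2}$, so $M_{34}=M_{12}^2$.  This sesquilinearity reflects the classical property of the Weil pairing on the $n$-torsion of $\hat C$, whose Rosati involution on $\O_2$ is the quaternionic conjugation that sends $S$ to $-S$.  Since $S$ does not lie in $\S_2$, the identity cannot be read off equation~(\ref{ringop}) directly; the essential additional input is therefore the identification of $f$ with the Weil pairing of the formal group (or equivalently, a direct verification in Morava $K$-theory along the Ravenel--Wilson lines already used in Lemma~\ref{3equ}).
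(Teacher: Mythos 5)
Your bilinear-expansion framework is sound and in fact mirrors the paper's own computation: writing $g=a+bS$, expanding $f(xg,xSg)$ over $x,\;x\omega,\;xS,\;xS\omega$, and using $f(y,y)=1$, the $\omega$-invariance \eqref{trivial} (your $M_{13}=M_{24}=C$ and, after expanding $\omega^2=-1-\omega$, your $M_{14}M_{23}=C^{-1}$), the exponent bookkeeping does produce $a\bar a-2b\bar b=\det(g)$, and the $M_{12}$, $M_{34}$ contributions cancel provided $M_{34}=M_{12}^2$. Up to that point your argument is correct and agrees with the paper (the paper's identity $f(x\omega,x)^2=f(xS,x\omega S)$ is exactly $M_{34}=M_{12}^2$, since $x\omega S=xS\omega^2$).

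The gap is precisely the third identity, which you yourself flag as ``the essential additional input.'' You justify $f(xS,xS\omega)=f(x,x\omega)^2$ by a Frobenius sesquilinearity $f(xS,y)\,f(x,yS)=1$, attributed to the identification of $f$ with the Weil pairing of $\hat C$ (with $\bar S=-S$ under the Rosati involution), or alternatively to an unspecified Morava $K$-theory verification. Neither is carried out, and neither is available in the paper's framework: $f$ is defined only via the map $\mu_n$ and the Bockstein, \eqref{ringop} applies only to $g\in\S_2$ (as you note, $S\notin\S_2$), and the paper explicitly aims to be self-contained rather than to import the Hopkins--Lurie/Westerland identification. So as written the proof of the crucial step is missing. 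The paper closes exactly this hole by an internal argument you could adopt: for $g$ with coefficients in $\Z$ the formula $f(x,xS)g=f(x,xS)^{\det g}$ is an immediate bilinearity computation; by the equivalence of Lemma \ref{3equ} this upgrades to the two-variable statement, giving in particular $f(x,y)(1+S)=f(x,y)^{-1}$ since $\det(1+S)=-1$; applying this with \eqref{ringop} to $f(x\omega,x)=f\bigl(x\omega(1+S),x(1+S)\bigr)^{-1}$, expanding by bilinearity and using \eqref{trivial} yields $f(x\omega,x)^2=f(xS,x\omega S)$, i.e.\ your $M_{34}=M_{12}^2$. Substituting this derivation for the Weil-pairing appeal makes your proof complete and essentially identical to the paper's.
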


\begin{proof}
For $g=a+bS$ compute
\begin{eqnarray*}
f(x,xS)g&=&f(xg,xSg)\\ 
&=&f(x({a+bS}),x({\bar{a}S+2\bar{b}}))\\
&=& f(x{a}+x{bS},x{\bar{a}S}+x{2\bar{b}})\\
&=&  f(x{a},x{\bar{a}S}) f(x{a},x{2\bar{b}})  f(x{bS},x{\bar{a}S})  f(x{bS},x{2\bar{b}})
\end{eqnarray*}
We claim that the middle two factors cancel each other and that the rest gives
$
f(x,xS)^{\det(g)}
$
as desired.
For all $g$ with $a,b\in \Z$ this follows easily from the properties of  the pairing $f$. 
In particular for $1+S$, we have proved with the previous lemma the equality
\begin{eqnarray*}\label{actionfxy}
f(x,y)(1+S)&=&f(x,y)^{-1}.
\end{eqnarray*}

 Using also (\ref{ringop}) and (\ref{trivial}) we compute:
\begin{eqnarray*}
f(x{\omega},x) 
&=&f(x{\omega}(1+S),x(1+S))^{-1} = f(x(1+S),x\omega (1+S))\\
&=&f(x+xS,x{\omega}+x{\omega S})\\
&=&f(x,x{\omega})f(x,x{\omega S})f(xS,x{\omega})f(xS,x{\omega S})\\
&=& f(x,x{\omega})f(x,xS\overline{\omega})f(xS,x{\omega})f(xS,x{\omega S})\\
&=& f(x,x{\omega})f(x\omega,xS)f(xS,x{\omega})f(xS,x{\omega S})\\
&=& f(x,x{\omega})f(xS,x{\omega S})
\end{eqnarray*}
and thus
$$ f(x{\omega},x)^2=f(x{S},x{\omega S}).$$
Write $a=a_1+a_2\omega$ and $b=b_1+b_2 \omega$. Then the first middle factor is
\begin{eqnarray*}
f(xa,x2\bar{b})&=& f(xa_1+xa_2\omega,2xb_1+2x\bar{\omega}b_2)\\
&=& f(x,x\bar{\omega})^{a_12b_2}f(x\omega,x)^{a_22b_1}f(x\omega,x\bar{\omega})^{a_22b_2}\\&=&f(x\omega,x)^{2(a_1b_2+a_2b_1-a_2b_2)}
\end{eqnarray*}
and the second one is
\begin{eqnarray*}
f(xS(b_1+b_2\bar{\omega}),xS(a_1+a_2\omega))&=&f(xS,xS\omega)^{b_1a_2}f(xS\bar{\omega},xS)^{a_1b_2}f(xS\bar{\omega},xS\omega)^{b_2a_2}\\
&=&f(xS,xS\bar{\omega})^{-b_1a_2-a_1b_2+b_2a_2}\\
&=&f(xS,x\omega S)^{-(a_1b_2+a_2b_1-b_2a_2)}.
\end{eqnarray*}
Hence, they cancel. It remains to compute the first and the last term.
\begin{eqnarray*}
f(xa,x\bar{a}S)&=&f(xa_1+xa_2\omega, xa_1S+xa_2S\omega)\\
&=&f(x,xS)^{a_1^2}f(x,xS\omega)^{a_1a_2}f(x\omega, xS)^{a_2a_1}f(x\omega,xS\omega)^{a_2^2}\\
&=&f(x,xS)^{a_1^2}f(x \bar{\omega}+x\omega, xS)^{a_1a_2}f(x,xS)^{a_2^2}\\
&=& f(x,xS)^{a_1^2-a_1a_2+a_2^2}\\
&=& f(x,xS)^{a\bar{a}}
\end{eqnarray*}
The identity
\begin{eqnarray*}
f(x{bS},x{2\bar{b}})&=& f(x,xS)^{-2b\bar{b}}
\end{eqnarray*}
follows similarly. 
\end{proof}

\subsection{The action on $r_U$}
In this subsection we describe the action on the difference class $r_U$ in the cohomology of $BU\langle 6\rangle$. We then also know the action on the string characteristic class $r$ because
 it is the image of $r_U$ under the map induced by the complexification map.
 \begin{thm} \label{action rU}The $\S_2$-action on
$$r_U\in E_2^*BU\langle 6 \rangle $$
is given by $$\psi^g (r_U) = q_0^g r_U^{det(g)}$$
for a unique $q_0^g\in E_2^*BSU$.
\end{thm}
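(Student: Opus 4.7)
The plan is to set $q_0^g := \psi^g(r_U) \cdot r_U^{-\det(g)}$ as a class in $E_2^* BU\langle 6\rangle$ and then show that it is pulled back from $E_2^* BSU$ along $p : BU\langle 6 \rangle \to BSU$. The fractional power $r_U^{-\det(g)}$ is well defined because $r_U = 1 + \tilde r_U$ with $\tilde r_U$ topologically nilpotent in the augmentation-completed cohomology; we take $r_U^{-\det(g)} := B_{-\det(g)}(\tilde r_U)$ in the notation of the proof of Theorem~\ref{action K}.

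The first step exploits the multiplicativity of $r_U$. For the $H$-space product $\mu : BU\langle 6 \rangle \times BU\langle 6 \rangle \to BU\langle 6 \rangle$ one has $\mu^* r_U = r_U \otimes r_U$, so composing with $j \times \id$ shows that the principal action $\nu : K(\Z,3) \times BU\langle 6 \rangle \to BU\langle 6 \rangle$ satisfies $\nu^* r_U = r_K \otimes r_U$. Applying $\psi^g$ to this identity and invoking Theorem~\ref{action K} yields
\begin{eqnarray*}
\nu^*(\psi^g r_U) &=& r_K^{\det(g)} \otimes \psi^g r_U, \\
\nu^*(r_U^{\det(g)}) &=& r_K^{\det(g)} \otimes r_U^{\det(g)}.
\end{eqnarray*}
Taking the quotient gives $\nu^*(q_0^g) = 1 \otimes q_0^g$, so $q_0^g$ is invariant under the principal $K(\Z,3)$-action.

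The second step identifies such $\nu^*$-invariants with the image of $p^* : E_2^* BSU \to E_2^* BU\langle 6\rangle$. Using the $BSU$-analogue $E_2^* BU\langle 6 \rangle \cong E_2^* BSU [\![\tilde r_U]\!]$ of formula~(\ref{CohKZ3}), write $c = \sum_i a_i \tilde r_U^i$ with $a_i \in E_2^* BSU$. Since $\nu^* \tilde r_U = \tilde r_K + \tilde r_U + \tilde r_K \tilde r_U$, comparing coefficients of $\tilde r_K^j$ for $j \geq 1$ in $\nu^* c = 1 \otimes c$ gives $\sum_i \binom{i}{j} a_i \tilde r_U^{i-j} = 0$; reading off the constant term in $\tilde r_U$ yields $a_j = 0$ for every $j \geq 1$. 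Hence $c = a_0$ is pulled back from $BSU$. Applied to $c = q_0^g$ this produces the desired lift, and uniqueness is immediate from the injectivity of $p^*$.

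The main obstacle is the structural input of the second step, namely the presentation $E_2^* BU\langle 6 \rangle \cong E_2^* BSU [\![\tilde r_U]\!]$. This should follow from a Serre spectral sequence or Kitchloo--Laures--Wilson style argument for the principal $K(\Z,3)$-fibration $K(\Z,3) \to BU\langle 6 \rangle \to BSU$, paralleling the presentation used earlier in the paper for the fibration $K(\Z,3) \to BString \to BSpin$. Once this is in place the theorem reduces directly to the action on $r_K$ computed in Theorem~\ref{action K}.
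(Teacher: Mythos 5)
Your argument is correct, and it ultimately rests on the same three inputs as the paper's proof --- multiplicativity of $r_U$, Theorem \ref{action K}, and the presentation $E_2^*BU\langle 6\rangle\cong E_2^*BSU[\![\tilde r_U]\!]$ --- but the intermediate mechanism is genuinely different. The paper never forms the quotient class first: it writes $\psi^g(r_U)=q(\tilde r_U)$ with $q=\sum_j q_jt^j$, $q_j\in E_2^*BSU$, uses the full $H$-space multiplication of $BU\langle 6\rangle$ to obtain the functional equation $q(t_1)q(t_2)=(\mu^*q)(t_1+t_2+t_1t_2)$, compares coefficients to see that $q_0$ is primitive and $q_j\otimes q_0=q_0\otimes q_j$, and then uses that $E_2^*BSU$ is a product of copies of $E_2^*$ to conclude $q_j\in E_2^*\cdot q_0$; only at the end does it restrict to the fiber and invoke Theorem \ref{action K}. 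You instead define $q_0^g=\psi^g(r_U)\,r_U^{-\det(g)}$ outright, use only the restriction of $\mu$ along $j\times\id$ (the fiber translation $\nu$) together with Theorem \ref{action K} to get $\nu^*q_0^g=1\otimes q_0^g$, and then prove a small descent lemma: a class invariant under $\nu$ is pulled back from $BSU$. Your route gives a cleaner conceptual statement and avoids the primitivity/pro-freeness argument; the paper's route avoids manipulating $2$-adic powers of $r_U$ until the last step. Two minor points: the coefficient of $\tilde r_K^j$ in $\nu^*c$ is $\sum_i\binom{i}{j}a_i(1+\tilde r_U)^j\tilde r_U^{i-j}$ rather than $\sum_i\binom{i}{j}a_i\tilde r_U^{i-j}$ --- harmless, since $(1+\tilde r_U)^j$ is a unit --- and you should note that the coefficients $a_i$ are unchanged under $\nu^*$ because $p\circ\nu\simeq p\circ \mathrm{pr}_2$, which uses that $p\circ j$ is null and that $p$ respects the $H$-space structures. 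Finally, the structural input you flag as the main obstacle is not a gap relative to the paper: the paper assumes exactly the same presentation without comment when it writes $\psi^g(r_U)$ as a power series in $\tilde r_U$ with coefficients in $E_2^*BSU$, and it does follow by the generalized Atiyah--Hirzebruch/Kitchloo--Laures--Wilson argument you indicate, just as in the paper's computation of $E_2^*P$.
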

\begin{proof}
When we set $\psi^g(r_U)=q(\tilde{r}_U)$ for $$q=\sum q_j t^j\in E_2^*(BSU)[[t]]$$ a power series,
we do not have the equality (\ref{powere}) any more, but instead
\begin{equation} \label{mpowere}
q(t_1)q(t_2)=(\mu^*q)(t_1+t_2+t_1t_2)
\end{equation}
where $\mu^*q$ is obtained by pulling back the coefficients of $q$ via $\mu$. Also $q$ does not need to have leading term 1 any more.
Comparing coefficients of $t_1^j$ and $t_2^j$, we obtain $q_j\otimes q_0 = q_0\otimes q_j$ for all $j$.
Since $E_2^*BSU$ is a direct product of copies of $E_2^*$ and $q_0$ is a primitive class, 
this equation can only hold if $q_j$ is a multiple of $q_0$ by an element in $E_2^*$. So $\psi^g(r_U)$ is $q_0$ times 
a power series in $E_2^*[[\tilde{r}_U]]$. Under the map to $E_2^*K(\Z,3)$, the class $q_0$ maps to $1$ and the
power series in $E_2^*[[\tilde{r}_U]]$ is not changed, so the statement of the theorem follows from \ref{action K}.
\end{proof}
\begin{cor}\label{action r}
$\psi^g(r)=c^*(q_0^g)r^{det(g)}$
\end{cor}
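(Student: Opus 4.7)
The plan is to deduce the corollary from Theorem \ref{action rU} by a single application of naturality. Since $r$ is defined as $r = c^*(r_U)$ where $c\colon BString \to BU\langle 6\rangle$ is the complexification map, and since $\psi^g$ is (the cohomology operation associated to) a self-map of the spectrum $E_2$, the operation $\psi^g$ commutes with every $E_2$-cohomology ring homomorphism, and in particular with $c^*$.

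Concretely, I would begin by applying $c^*$ to the identity $\psi^g(r_U) = q_0^g \cdot r_U^{\det(g)}$ of Theorem \ref{action rU} in $E_2^*BU\langle 6\rangle$, where $q_0^g \in E_2^*BSU$ is pulled back to $E_2^*BU\langle 6\rangle$ along the projection $BU\langle 6\rangle \to BSU$. Using naturality of $\psi^g$ and the fact that $c^*$ is a ring homomorphism, compute
\begin{equation*}
\psi^g(r) \;=\; \psi^g(c^*r_U) \;=\; c^*\psi^g(r_U) \;=\; c^*\bigl(q_0^g \cdot r_U^{\det(g)}\bigr) \;=\; c^*(q_0^g) \cdot c^*(r_U)^{\det(g)} \;=\; c^*(q_0^g)\cdot r^{\det(g)},
\end{equation*}
which is exactly the claimed formula. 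Here $c^*(q_0^g)$ is understood as the pullback of $q_0^g \in E_2^*BSU$ along the composite $BString \xrightarrow{c} BU\langle 6\rangle \to BSU$.

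There is essentially no obstacle: the corollary is an immediate naturality consequence of the preceding theorem, once one observes that $\det(g) \in \Z_2^\times$ makes the exponent a well-defined integer (modulo the relevant power of $2$) in the $K(2)$-local setting, so that $r^{\det(g)}$ makes sense via the formal identity $r^{\det(g)} = (1+\tilde r)^{\det(g)} = B_{\det(g)}(\tilde r)$ already used in the proof of Theorem \ref{action K}. The only thing worth spelling out in the final write-up is the convention that $q_0^g$, originally an element of $E_2^*BSU$, is being tacitly pulled back to $E_2^*BString$ via the forgetful map $c$ composed with $BU\langle 6\rangle \to BSU$.
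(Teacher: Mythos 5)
Your proof is correct and is exactly the paper's argument: the paper's proof of this corollary is the one-line observation that it follows from Theorem \ref{action rU} together with $c^*r_U=r$, which is precisely the naturality computation you spell out. The extra remarks about pulling $q_0^g$ back along $BString \to BU\langle 6\rangle \to BSU$ and interpreting $r^{\det(g)}$ via the binomial series are consistent with the paper's conventions and add no new content.
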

\begin{proof}
This follows immediately from the Theorem and from the equation $$c^*r_U=r.$$
\end{proof}
We devote the rest of this section to the investigation of the class $q_0$ for $SU$-bundles.
\begin{prop}
Let
$f:  BU(1)^3\to BU\langle 6\rangle $ be the map which classifies the product $(1-L_1)(1-L_2)(1-L_3)$.Then we have
\begin{eqnarray*}
f^*q_0^g&=& \frac{1-3((t_0(g)^2u_1+\frac{2}{3}t_1(g))^3-t_0(g)^3) u^3x_0x_1x_2+\dots}
{1-det(g)3(u_1^3-1)u^3x_0x_1x_2+\dots}
\end{eqnarray*}
\end{prop}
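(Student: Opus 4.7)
The strategy is to reduce the statement to an explicit computation of $f^*r_U$ and then invoke Theorem~\ref{action rU}. Since $\psi^g(r_U) = q_0^g\cdot r_U^{\det(g)}$ in $E_2^*BU\langle 6\rangle$ and $\psi^g$ is natural under $f^*$, we have
$$ f^*q_0^g \;=\; \frac{\psi^g(f^*r_U)}{(f^*r_U)^{\det(g)}}, $$
so the proposition reduces to computing $f^*r_U$ to leading order in $x_0x_1x_2$ and then expanding the numerator and denominator.

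The central ingredient is the identification of $f^*r_U$ with the three-variable theta function attached to the cubical structure on the formal group $\hat C_U$: by the AHS construction \cite{MR1869850} of the Witten orientation, $r_U$ evaluated on $(1-L_1)(1-L_2)(1-L_3)$ is precisely the ratio of the sigma (Witten) Thom class to the complex Thom class on this bundle, which is the standard cubical-structure cocycle. Using the Weierstrass data $a_1=3u_1$, $a_3=u_1^3-1$ of $C_U$ and the Taylor expansion of the corresponding sigma function, one computes the lowest-order nontrivial contribution
$$ f^*r_U \;=\; 1 \;-\; 3(u_1^3-1)u^3\, x_0x_1x_2 \;+\; \text{higher order in }x_0x_1x_2. $$
Identifying the leading coefficient as $3a_3$ (times the power of $u$ dictated by degree) is the main obstacle; the rest of the proof is then purely formal manipulation.

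With $f^*r_U$ in hand, applying $\psi^g$ uses only $\psi^g(u)=t_0(g)u$ and $\psi^g(u_1)=t_0(g)u_1+\tfrac{2t_1(g)}{3t_0(g)}$ from the earlier subsections, together with the fact that $\psi^g$ acts on each $x_i$ trivially modulo $O(x_i^2)$. The identity
$$ t_0(g)^3\Bigl(t_0(g)u_1+\tfrac{2t_1(g)}{3t_0(g)}\Bigr)^3 \;=\; \Bigl(t_0(g)^2 u_1+\tfrac{2}{3}t_1(g)\Bigr)^3 $$
then rewrites $\psi^g\bigl(u^3(u_1^3-1)\bigr)=u^3\bigl[\bigl(t_0(g)^2 u_1+\tfrac{2}{3}t_1(g)\bigr)^3-t_0(g)^3\bigr]$, producing the numerator of the stated formula. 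For the denominator, since $f^*r_U$ has constant term $1$ the power $(f^*r_U)^{\det(g)}$ is well-defined via the binomial series and expands as $1-\det(g)\cdot 3(u_1^3-1)u^3x_0x_1x_2+\cdots$. Taking the quotient yields the claimed expression.
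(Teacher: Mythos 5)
Your proposal is correct and follows essentially the same route as the paper: identify $f^*q_0^g=\psi^g(f^*r_U)/(f^*r_U)^{\det(g)}$ via Theorem~\ref{action rU} and naturality, use $f^*r_U=1-3(u_1^3-1)u^3x_0x_1x_2+\dots$, and then apply $\psi^g(u)=t_0(g)u$, $\psi^g(u_1)=t_0(g)u_1+\tfrac{2t_1(g)}{3t_0(g)}$ and $\psi^g(x_i)=x_i+O(x_i^2)$ together with the binomial expansion of the denominator. The only difference is that you sketch a justification (via the AHS cubical-structure description of the Witten orientation on $C_U$) of the expansion of $f^*r_U$, which the paper simply asserts; this is a harmless addition, not a different argument.
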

\begin{proof}
We have $$f^*r_U=1-3(u_1^3-1)u^3x_0x_1x_2+\dots$$
and
\begin{eqnarray*}
f^*q_0^g(1-3(u_1^3-1)u^3x_0x_1x_2+\dots)^{det(g)}= f^*\psi^g(r_U)\\
=\psi^g(f^*r_U) = 
1-3(\psi^g(u_1)^3-1)\psi^g(u)^3\psi^g(x_0)\psi^g(x_1)\psi^g(x_2)+\dots .
\end{eqnarray*}
This yields
\begin{eqnarray*}
f^*q_0^g&=&\frac{1-3(\psi^g(u_1)^3-1)\psi^g(u)^3\psi^g(x_0)\psi^g(x_1)\psi^g(x_2)+\dots}
{1-det(g)3(u_1^3-1)u^3x_0x_1x_2+\dots}\\
&=& \frac{1-3((t_0(g)u_1+\frac{2t_1(g)}{3t_0(g)})^3-1) t_0(g)^3u^3x_0x_1x_2+\dots}
{1-det(g)3(u_1^3-1)u^3x_0x_1x_2+\dots}\\
&=& \frac{1-3((t_0(g)^2u_1+\frac{2}{3}t_1(g))^3-t_0(g)^3) u^3x_0x_1x_2+\dots}
{1-det(g)3(u_1^3-1)u^3x_0x_1x_2+\dots}.
\end{eqnarray*}
\end{proof}
The class $q_0$ is multiplicative. Hence, it can be described by a single power series which we work out next.

\begin{definition}
Let $K(\Z,3)\to P\to BS^1=K(\Z,2)$ be the fiber bundle with $k$-invariant a generator of $H^4(BS^1;\Z)$.
\end{definition}
Let $\kappa$ be the composition $BS^1\to BSU(2)\to BSU$, where the first map is induced by the inclusion of 
a maximal torus into $SU(2)$.
Then there are the following pullback squares:
$$
\xymatrix{P\ar[r]^-\iota\ar[d]^p&  BString \ar[d]\\  BS^1 \ar[r]^-\iota &BSpin, }\qquad
\xymatrix{P\ar[r]^-\kappa\ar[d]^p&  BU\langle 6\rangle \ar[d]\\  BS^1 \ar[r]^-\kappa &BSU. }
$$

\begin{lemma}
There is an isomorphism 
$$ E_2^*P \cong E_2^* [\! [x,\tilde{r}_P]\!] $$
where $x$ is the pullback of the class with same name in $E_2^*BS^1$ and $r_P=\kappa^*r_U$.  
As a virtual real vector bundle, 
the bundle $\iota$ over $P$ is isomorphic to the pullback of $L^2 -1_\C$ over $BS^1$ to $P$.
\end{lemma}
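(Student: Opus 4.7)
The plan for the first statement is to apply the Atiyah--Hirzebruch--Serre spectral sequence to the fibration
$$K(\Z,3)\lra P\lra BS^1.$$
Since $BS^1$ is simply connected, the local system is trivial, and combining with Equation~(\ref{CohKZ3}) the $E_2$-page reads
$$E_2^{s,t}=H^s(BS^1)\otimes E_2^tK(\Z,3)\cong E_2^*[\![x]\!]\otimes E_2^*[\![\tilde r_K]\!].$$
The base class $x$ is a permanent cycle. To show $\tilde r_K$ is also a permanent cycle, I would define $\tilde r_P:=\kappa^*\tilde r_U\in E_2^*P$ using the right-hand pullback square; the identity $j^*r_U=r_K$ from the remark following Theorem~\ref{2.4} guarantees that $\tilde r_P$ restricts on each fiber to $\tilde r_K$. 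Thus every topological generator of the $E_2$-page is a permanent cycle, so all differentials vanish and the spectral sequence collapses. Mapping $x$ and $\tilde r_P$ in $E_2^*[\![x,\tilde r_P]\!]$ to the corresponding classes in $E_2^*P$ gives a map of filtered $E_2^*$-algebras inducing an isomorphism on associated gradeds, hence the desired ring isomorphism.

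For the statement about the underlying virtual real bundle, the plan is first to reduce the claim to showing $L_\R\cdot(L_\R-2)=0$ in $\widetilde{KO}(P)$. Using $L_\R=(L^{-1})_\R$ one has $(L\oplus L^{-1})_\R-4=2L_\R-4$, and using the $KO$-identity $L_\R\cdot L_\R=(L^2)_\R+2$ the difference $(2L_\R-4)-((L^2)_\R-2)$ simplifies to $-L_\R(L_\R-2)$. The essential geometric input is that the $k$-invariant of $P\to BS^1$ equals (up to sign and unit) the second Chern class of the $SU$-bundle $L\oplus L^{-1}-2$, which is a generator of $H^4(BS^1;\Z)$; hence $x^2=0$ in $H^*(P;\Z)$, and this in turn forces the rational Chern character of $L_\R(L_\R-2)$ to vanish on $P$. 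To upgrade rational vanishing to an honest equality in $\widetilde{KO}(P)$, I would run the Atiyah--Hirzebruch--Serre spectral sequence for $KO$-cohomology on the same fibration and use the sparsity of $KO^*K(\Z,3)$ in the relevant low degrees to rule out torsion discrepancies.

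I expect the first part to be essentially formal once the fiber restriction of $\tilde r_P$ is identified with $\tilde r_K$. The main obstacle will be the second part: while matching of rational Pontryagin classes follows immediately from $x^2=0$ on $P$, controlling the potential 2-torsion in $\widetilde{KO}(P)$ so as to conclude a genuine equality of virtual real bundles requires the more delicate $KO$-theoretic analysis sketched above.
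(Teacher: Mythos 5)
Your argument for the ring isomorphism is essentially the paper's: the paper collapses the Atiyah--Hirzebruch spectral sequence of $K(\Z,3)\to P\to BS^1$ because everything is concentrated in even degrees, and notes that $\tilde r_P=\kappa^*\tilde r_U$ restricts to a generator of the fiber; your permanent-cycle and multiplicativity version of the collapse is an equivalent way to say the same thing (modulo the usual completed-tensor and convergence caveats, which the paper also only handles by citation). This half is fine.

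The second half has a genuine gap. The lemma concerns the bundle classified by $\iota\colon P\to BString$ from the \emph{left} pullback square, i.e.\ the pullback of the universal bundle along the maximal-torus map $BS^1\to BSpin(3)\to BSpin$. Your reduction starts from $(L\oplus L^{-1})_{\R}-4=2L_{\R}-4$, so you tacitly replace $\iota$ by the realification of the $SU(2)$-bundle $\kappa=L+L^{-1}-2$ coming from the \emph{right} square; nothing in the setup identifies these two maps $P\to BString$, and this identification is never argued (note also that the $SU$-reduction the paper actually uses later, in (\ref{su bdl}), is $L^2-1+\bar L-L$, not $\kappa$). Hence the vanishing of $L_{\R}(L_{\R}-2)$ in $\widetilde{KO}(P)$ that you aim for is not the statement of the lemma; and even that substitute statement is only sketched, since the rational part follows from $x^2=0$ on $P$ but the $2$-torsion control via the $KO$-theoretic Atiyah--Hirzebruch spectral sequence is exactly the hard point and is not carried out. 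The paper's proof avoids all of this: by diagram (\ref{Pdiagram}) the maximal torus of $Spin(3)$ double covers that of $SO(3)$, so the composite $P\to BString\to BSpin\to BSO$ factors through $BS^1\xrightarrow{B(z\mapsto z^2)}BS^1\to BSO(3)\to BSO$, which identifies the underlying real bundle of $\iota$ with the pullback of $(L^2)_{\R}\oplus\R$, i.e.\ stably with $L^2-1_{\C}$, on the nose as classifying maps; no $K$-theory, spectral sequence, or torsion analysis is needed, and this stronger bundle-level identification is what the later sections actually use.
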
 
\begin{proof}
For the fibration $K(\Z, 3)\lra P\lra BS^1$ the generalized Atiyah-Hizebruch spectral sequence 
$$H^*(BS^1, E_2^*K(\Z,3))\Longrightarrow E_2^*P$$
collapses since it is concentrated in even degrees (compare \cite[Proposition 2.0.1]{MR1648284}). The
class $\tilde{r}_P=\kappa^*\tilde{r}_U$ restricts to a generator of the fiber.
The last statement follows from the commutative diagram 
\begin{eqnarray}\label{Pdiagram}
\xymatrix{
P  \ar[rr]\ar[d]&&BString  \ar[d] \\
BS^1 \ar[d]_-{B(z\mapsto z^2)}\ar[r] & BSpin(3)\ar[d]\ar[r] & BSpin\ar[d] \\
BS^1\ar[r] & BSO(3)\ar[r] & BSO.
}
\end{eqnarray}
\end{proof}
\begin{prop}\label{formula q0}
Rationally, the elliptic Chern character of the multiplicative $SU$-class $q_0^g$ 
for the bundle $\xi=(1-L_1)(1-L_2)$ over $BS^1\times BS^1$ is given by
$$q_0^g (\xi)=\psi^g\left( \frac{\beta(\tau ,x+y)}{\beta(\tau ,x)\beta(\tau ,y))}\right) 
\left( \frac{\beta(\tau ,x)\beta(\tau ,y)}{\beta(\tau ,x+y)}\right)^{det(g)}.$$
Here, $\beta$ is the power series
$$\beta(\tau,x)=\frac{\Phi(\tau,x-\omega)}{\Phi(\tau,-\omega)}$$
and $\Phi$ is the Weierstrass $\Phi$-function (cf.\ref{2.4}(iii)).
In particular for the bundle $\kappa =  L+\bar{L}-2 =-(1-L)(1-\bar{L})$ we obtain

$$ q_0^g (\kappa)=\frac{ (\beta(\tau ,x)\beta(\tau ,-{x}))^{det(g)}  }{\psi^g(\beta(\tau ,x)\beta(\tau ,-{x}))} . $$
\end{prop}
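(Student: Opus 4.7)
The plan is to derive both displayed formulas directly by combining Theorem \ref{action rU}, which gives $q_0^g = \psi^g(r_U)/r_U^{\det(g)}$, with the elliptic-character formula for $r_U$ from Theorem \ref{2.4}(iii). Since the conclusion is stated ``rationally,'' I would work throughout with the image under the Chern character of the elliptic character $\lambda$, in which $r_U$ of a virtual bundle with formal complex Chern roots $\{x_i\}$ equals $\prod_i \beta(\tau, x_i)$, where $\beta(\tau,t) = \Phi(\tau, t-\omega)/\Phi(\tau, -\omega)$.

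For $\xi = (1-L_1)(1-L_2)$, I would expand $\xi$ as a virtual sum
\[
\xi \;=\; 1 \;-\; L_1 \;-\; L_2 \;+\; L_1 \otimes L_2
\]
whose positive part has Chern roots $\{0,\, x +_F y\}$ and whose negative part has Chern roots $\{x,y\}$, with $x = c_1(L_1)$, $y = c_1(L_2)$, and $+_F$ the formal group sum. Multiplicativity of $r_U$ under direct sum (both Thom classes being multiplicative), together with $r_U(-\eta) = r_U(\eta)^{-1}$ and $\beta(\tau,0) = 1$, then gives
\[
r_U(\xi) \;=\; \frac{\beta(\tau, x+y)}{\beta(\tau, x)\,\beta(\tau, y)}.
\]
Substituting into $q_0^g(\xi) = \psi^g(r_U(\xi))/r_U(\xi)^{\det(g)}$ immediately yields the first displayed formula.

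For the second formula I would specialize $L_1 = L$ and $L_2 = \bar L$, so that $y = [-1](x)$ (rationally $-x$), the tensor $L \otimes \bar L$ is trivial, and $x +_F y = 0$. The first formula then specializes to an expression for $q_0^g((1-L)(1-\bar L))$. Since $\kappa = L + \bar L - 2 = -(1-L)(1-\bar L)$ as a virtual bundle, and both $r_U$ and $q_0^g$ are multiplicative under direct sum (so sending $\eta \mapsto -\eta$ inverts their values), inversion gives $q_0^g(\kappa) = q_0^g((1-L)(1-\bar L))^{-1}$, producing the stated formula for $\kappa$.

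The main technical point is justifying the passage from a formula in the rationalized target of the elliptic character back to one in $E_2$-cohomology. This relies on the injectivity of $ch \circ \lambda$ on the relevant $E_2$-cohomology---in particular on $E_2^*(BU(1)^2)$, which is a power series ring in the two Euler classes and embeds into its characteristic-zero completion by Landweber exactness and torsion-freeness. The only other obstacles are purely clerical: handling the formal group sum $+_F$ correctly (it agrees with ordinary sum after passing to the elliptic character) and tracking reciprocals carefully when swapping between $\kappa$ and $-\kappa$.
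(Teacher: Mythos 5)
Your route is essentially the paper's: combine the identity $q_0^g=\psi^g(r_U)/r_U^{\det(g)}$ of Theorem \ref{action rU} with the elliptic-character formula $\prod_i\beta(\tau,x_i)$ for $r_U$ from Theorem \ref{2.4}(iii), and evaluate on $(1-L_1)(1-L_2)$ by multiplicativity. The one substantive point you pass over silently is that $\xi=(1-L_1)(1-L_2)$ does \emph{not} lift to $BU\langle 6\rangle$ (its second Chern class is $-xy\neq 0$), so the identity of Theorem \ref{action rU}, which is proved in $E_2^*BU\langle 6\rangle$, cannot literally be pulled back along $\xi$; one must argue that, rationally, the $SU$-class $q_0^g$ is computed by the same recipe applied to the Chern-root extension of $r_U$. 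The paper performs exactly your computation but justifies this step by the rational compatibility of $BU\langle 6\rangle\to BSU\to BU$ with the $\delta$-construction taking $1$-structures to $2$-structures to cubical structures (citing Ando--Hopkins--Strickland), together with the fact that $q_0^g$ is an $SU$-characteristic class; your ``extend $r_U$ by Chern roots and substitute'' is the same idea but needs to be said. By contrast, the technical point you do flag, injectivity of $ch\circ\lambda$ in order to return to $E_2$-cohomology, is beside the point: the proposition only asserts a formula for the rational (elliptic Chern character) image, so no passage back is required.

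The concrete error is in your last step. Specializing your first formula at $L_1=L$, $L_2=\bar L$ (so $y=-x$ and $\beta(\tau,0)=1$) gives $q_0^g\bigl((1-L)(1-\bar L)\bigr)=(\beta(\tau,x)\beta(\tau,-x))^{\det(g)}/\psi^g\bigl(\beta(\tau,x)\beta(\tau,-x)\bigr)$, which is already the displayed $\kappa$-formula; your additional inversion coming from $\kappa=-(1-L)(1-\bar L)$ then yields the reciprocal, $\psi^g\bigl(\beta(\tau,x)\beta(\tau,-x)\bigr)/(\beta(\tau,x)\beta(\tau,-x))^{\det(g)}$, so your argument does not ``produce the stated formula'' as claimed. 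Indeed, the two displays of the proposition are incompatible with the multiplicativity you correctly invoke: with the convention of Theorem \ref{2.4}(iii) the first display is the consistent one, and the printed $\kappa$-formula is the value on $+(1-L)(1-\bar L)$, i.e.\ it appears to carry an inversion slip. You cannot both perform the inversion and assert agreement; you needed to notice this discrepancy and either stand by your (reciprocal) answer for $\kappa$ or point out explicitly that the second display computes $q_0^g$ of $-\kappa$.
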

\begin{proof}
Rationally, the maps
$$ BU\langle 6 \rangle \lra BSU \lra BU$$
induce ring maps in cohomology which are compatible with the action of $\G_2$ and which send the 1-structure to a 2-structure via 
$$\delta(g)(x,y)=\frac{g(x+_Fy)}{g(x)g(y)}$$
and similarly a 2-structure to a cubical structure over the coefficient ring (compare \cite{MR1869850}). Since the class $\beta(\tau, x)$ coincides with $r_U$ by Theorem \ref{2.4} 
the result follows from the identity 
$$ q_0^g=\frac{\psi^g(r_U)}{r_U^{det(g)}}$$
and the fact that $q_0^g$ is an $SU$-characteristic class.
\end{proof}

\section{Cannibalistic classes}
Before we consider cannibalistic classes with values in $TMF$ with level structures or in $E_2$  we will review cannibalistic classes in complex $K$-theory. Since we are only interested in stable operations it is convenient to work in $p$-adic $K$-theory. We write $\psi^q$ for the stable Adams operation if $q$ is a $p$-adic unit. Bott's cannibalistic classes
$\theta^q (V)\in K(X)$ for virtual vector bundles $V$ over $X$ are defined by the equation
$$ \psi^q (\tau) = \theta^q(V)\tau.$$
when  $\tau$ is the Thom class. These classes are multiplicative, that is, they take sums to products because the Thom classes and the operations behave that way. Hence, the splitting principle implies that it suffices to consider the canonical line bundle $L$ over $BS^1$. Let $s$ denote its zero section in the Thom space $Th(L)$. Then the composite 
$$ \xymatrix{ K(BS^1)\ar[r]^\tau & \tilde{K}^2(Th(L))\ar[r]^s &\tilde{K}^2(BS^1)}$$
is the multiplication with the Euler class $x=v^{-1}(1-L)$, ($v$ denotes the Bott class). Hence it is  injective.  Since the product $\theta^q(L)\, x$  coincides with $\psi^q(x)$ this gives
$$ \theta^q(L)=\frac{\psi^q(x)}{x}=\frac{[q](x)}{qx}=\frac{1-(1-vx)^q}{qvx}.$$ 
For real $p$-adic $K$-theory we assume that $V$ is a spin bundle. Again by the splitting principle, it suffices to compute the real cannibalistic classes of the spin bundle $L^2$ over $BS^1$. As in the complex case, the composite
$$ \xymatrix{ KO(BS^1)\ar[r]^\tau & \widetilde{KO}^2(Th(L^2))\ar[r]^s &\widetilde{KO}^2(BS^1)\ar[r]^c&\tilde{K}^2(BS^1)}$$
is the multiplication by the complexified Euler class $e$. The positive complexified spinor bundle of $L^2$ is $\bar{L}$ and the negative one is $L$. So, the Euler class is
$$ e=v^{-1}(\bar{L}-L)=x-\bar{x}.$$
and hence $\theta^q(L^2)\, e = \psi^q(x-\bar{x})$. The following result follows immediately form this computation. It can also be deduced from a calculation of  Adams in \cite{MR0198469}  with the Chern character.
\begin{prop}\cite[Lemma 3.14]{MR2004426}
$$c\, \theta^q(L^2)=\frac{(1-vx)^{-q}-(1-vx)^q}{q((1-vx)^{-1}-(1-vx))}.$$
\end{prop}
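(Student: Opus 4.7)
The plan is to derive the formula by a direct calculation, using the Euler class identification established in the paragraph preceding the proposition and the fact that cannibalistic classes are determined by their behavior on the canonical line bundle.

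By construction, $c\,\theta^q(L^2)$ is characterized by the equation
\[
c\,\theta^q(L^2)\cdot e \;=\; \psi^q(e), \qquad e=x-\bar{x},
\]
after complexifying the sequence $KO(BS^1)\to \widetilde{KO}^2(Th(L^2))\to \widetilde{KO}^2(BS^1)\to \tilde{K}^2(BS^1)$, since this composite becomes multiplication by $e$ and is injective.  So the task reduces to evaluating $\psi^q(x)$ and $\psi^q(\bar{x})$, subtracting, and dividing by $e$.

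First I would record the two consequences of the identity $L=1-vx$ (equivalently $x=v^{-1}(1-L)$) already used for the complex case: namely $\bar{L}=L^{-1}=(1-vx)^{-1}$, so that $\bar{x}=v^{-1}\bigl(1-(1-vx)^{-1}\bigr)$, and therefore
\[
e \;=\; v^{-1}\bigl((1-vx)^{-1}-(1-vx)\bigr).
\]
Next, using $\psi^q(L)=L^q$ and $\psi^q(v)=qv$ (the same conventions underlying the formula for $\theta^q(L)$ in the preceding paragraph), I would compute
\[
\psi^q(x)=\tfrac{1-(1-vx)^q}{qv},\qquad \psi^q(\bar{x})=\tfrac{1-(1-vx)^{-q}}{qv},
\]
and subtract to obtain
\[
\psi^q(e)\;=\;\tfrac{(1-vx)^{-q}-(1-vx)^q}{qv}.
\]
Finally, dividing by $e$ gives the stated formula.

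There is no real obstacle in the argument; the whole proof is a one-line calculation once the identification of the composite with multiplication by $e$ is in place. The only point at which one has to be attentive is to use the correct conventions for $\psi^q$ on the Bott class $v$ and on the conjugate line bundle $\bar{L}$, so that the formula for $\theta^q(L)$ recalled just before the proposition is recovered from the real formula by specialization.
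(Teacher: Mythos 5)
Your proposal is correct and follows essentially the same route as the paper: the paper likewise identifies the composite with multiplication by the complexified Euler class $e=x-\bar{x}=v^{-1}(\bar{L}-L)$ of the spinor bundles, obtains $\theta^q(L^2)\,e=\psi^q(x-\bar{x})$, and states that the formula follows immediately from this computation. You have merely written out the short evaluation of $\psi^q(x)$, $\psi^q(\bar{x})$, and the division by $e$ explicitly, with the same conventions that recover the formula for $\theta^q(L)$.
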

There is another way to compute the real cannibalistic class of $L^2$ which will be more instructive when it comes to the string case. For that, observe that the virtual spin bundle $L^2-1$ coincides with the realification of the $SU$-bundle
\begin{eqnarray}\label{su bdl}
 L^2-1+\bar{L}-L &=& (1-L)^2-(1-L)(1-\bar{L}).
 \end{eqnarray}
Moreover, the real Thom class of an $SU$- bundle coincides with the complex Thom class. Hence, when we write $\theta_U^q$ for the complex cannibalistic class we get
$$ c \, \theta^q(L^2-1)=\frac{\theta^q_U((1-L)^2)}{\theta_U^q((1-L)(1-\bar{L})}.$$
Now the result follows from an elementary calculation and the formula above for the complex cannibalistic classes. 
\par
We now turn to the string case.
\begin{definition} 
Let $\xi$ be a virtual string bundle of dimension 0 over some base space $X$. For $g \in \G_2$ define the cannibalistic class $\theta^g(\xi)\in E_2^*(X)$ by the equation
$$ \psi^g (\tau)=\theta^g (\xi) \tau $$
where $\tau$ is the Thom class obtained by pulling back $\sigma$ to the Thom space of $\xi$. 
We denote by ${\theta}^g=\theta^g(\sigma)$ the cannibalistic class for the universal stable bundle over $BString$.
In particular, we have $ \theta^g (\xi) = \xi^*{\theta}^g$.

Similarly, we have a complex Thom class for the universal complex vector bundle,
and so a complex Thom class $\tau_\C$ for every virtual complex vector bundle $\xi$ of dimension 0 over $X$.  This results in a complex cannibalistic class $$ \theta_\C^g (\xi)=\frac{\psi^g(\tau_\C)}{\tau_\C}\in E_2^*(X),$$ which is the pullback of the 
class $\theta_\C^g$ for the universal complex vector bundle.

\end{definition}
\begin{prop}\label{cannibalisticsum}
We have
\begin{enumerate}
\item
$\theta^g(0)=1$
\item 
$\theta^{g}(\xi +\eta)
=\theta^g (\xi )  \theta^g (\eta) 
$
\item 
$ \theta^{g \nu}(\xi) = \psi^\nu(\theta^g(\xi)) \theta^\nu (\xi)$.
\end{enumerate}

\end{prop}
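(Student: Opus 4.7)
The plan is to prove all three parts by unwinding the defining equation $\psi^g(\tau) = \theta^g(\xi)\tau$ inside $E_2^*(Th(\xi))$ and transporting each resulting identity through the Thom isomorphism $(-)\cdot\tau \colon E_2^*(X)\xrightarrow{\cong} E_2^*(Th(\xi))$. The two ring-map properties I will use repeatedly are that each $\psi^g$ is a map of ring spectra (hence multiplicative and unit-preserving), and that the Witten orientation $\sigma \colon MString \to T_i$ is itself a ring map, so the pulled-back Thom class is multiplicative under external products of string bundles.

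For (i), when $\xi = 0$ the Thom spectrum is $\Sigma^\infty X_+$ and $\tau$ reduces to the unit $1 \in E_2^0(X_+)$. Since $\psi^g$ preserves the unit, the defining equation becomes $1 = \theta^g(0)\cdot 1$, forcing $\theta^g(0)=1$.

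For (ii), I invoke multiplicativity of $\tau$ under external direct sum, which comes from the ring structure on $\sigma$: one has $\tau(\xi+\eta) = \tau(\xi)\cdot\tau(\eta)$ after pullback to the diagonal. Applying $\psi^g$ as a ring map gives
\[
\psi^g(\tau(\xi+\eta)) = \psi^g(\tau(\xi))\cdot\psi^g(\tau(\eta)) = \theta^g(\xi)\theta^g(\eta)\cdot\tau(\xi+\eta),
\]
and the Thom isomorphism extracts $\theta^g(\xi+\eta)=\theta^g(\xi)\theta^g(\eta)$.

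For (iii), the key input is the composition rule $\psi^{g\nu}=\psi^\nu\circ\psi^g$, consistent with the right-translation formula $(1\wedge\psi^\nu)f(g)=f(g\nu)$ recorded earlier in Section 2. Then
\[
\theta^{g\nu}(\xi)\cdot\tau \;=\; \psi^{g\nu}(\tau) \;=\; \psi^\nu(\psi^g(\tau)) \;=\; \psi^\nu(\theta^g(\xi)\tau) \;=\; \psi^\nu(\theta^g(\xi))\cdot\theta^\nu(\xi)\cdot\tau,
\]
and the Thom isomorphism yields the stated cocycle identity. The only subtle point, and what I would double-check, is the composition convention on $\G_2$ acting by $\psi^g$: getting the ordering right in (iii) is pure bookkeeping rather than a mathematical obstacle, but it is the one place where a careless sign-of-convention could flip the formula.
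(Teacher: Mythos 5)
Your proposal is correct and is exactly the argument the paper has in mind: the paper simply states that the proposition ``follows immediately from the definition,'' and your write-up supplies those immediate details (unit preservation of $\psi^g$ for (i), multiplicativity of the Witten Thom class and of $\psi^g$ for (ii), and the composition rule for the operations for (iii)). Your caveat about the ordering convention in (iii) is well placed -- the stated identity corresponds to reading the composite as $\psi^{g\nu}=\psi^\nu\circ\psi^g$ (equivalently, relabelling $g\nu$ versus $\nu g$ under the opposite convention) -- but this is pure bookkeeping and does not affect correctness.
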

The proof follows immediately from the definition. Analogous formulas hold for the complex cannibalistic classes. The complex classes can be calculated with the help of splitting principle and the following result which follows from the definition.
\begin{prop} For the complex line bundle $L$ over $BU(1)$ with Euler class $x\in E_2^0\Sigma^{-2}BU(1)$ we have 
$$ \theta_\C^{g}(L)=\frac{\psi^{g}(x)}{x}.$$

\end{prop}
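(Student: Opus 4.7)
The plan is to unwind the definition of $\theta^g_\C(L)$ and pull back via the zero section, exactly as in the treatment of Bott's classical cannibalistic classes earlier in the section. By definition, $\theta^g_\C(L)$ is the unique element of $E_2^*(BU(1))$ satisfying
$$\psi^g(\tau_\C)=\theta^g_\C(L)\,\tau_\C,$$
where $\tau_\C\in E_2^0$ of the Thom spectrum of the virtual bundle $L-1$. So the task is to identify both sides of this equation after restriction along an appropriate map.

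The key step is to apply the zero section $s\colon BU(1)_+\to \mathit{Th}(L-1)$, which by the very definition of a complex orientation takes the Thom class $\tau_\C$ to the Euler class $x\in E_2^0\Sigma^{-2}BU(1)$. I will use naturality of the stable operation $\psi^g$ (which holds since $\psi^g$ is realized by a self-map of $E_2$) together with multiplicativity of pullback in cohomology to compute
$$s^*\psi^g(\tau_\C)=\psi^g(s^*\tau_\C)=\psi^g(x),\qquad s^*\bigl(\theta^g_\C(L)\,\tau_\C\bigr)=\theta^g_\C(L)\cdot x.$$
Comparing these yields $\psi^g(x)=\theta^g_\C(L)\cdot x$.

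The only remaining point is that we may divide by $x$. This is immediate because $E_2^*BU(1)\cong E_2^*[\![x]\!]$, so $x$ is a non-zero divisor, giving $\theta^g_\C(L)=\psi^g(x)/x$ as claimed. There is really no obstacle here: the statement is a direct analogue of the $K$-theoretic formula $\theta^q(L)=\psi^q(x)/x$ recalled at the beginning of the section, and the proof is essentially two lines. The only thing to double-check is that one uses the correct shift conventions so that the Thom class of the dimension-zero virtual bundle $L-1$ restricts under the zero section to the class the author denotes $x\in E_2^0\Sigma^{-2}BU(1)$; once the bookkeeping is right, the conclusion is forced by the definition of $\theta^g_\C$.
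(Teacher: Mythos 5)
Your proof is correct and is essentially the paper's argument: the paper only remarks that the formula ``follows from the definition,'' with the intended reasoning being exactly the zero-section computation it carries out explicitly for the $K$-theoretic class $\theta^q(L)$ earlier in the section, which you reproduce verbatim in the $E_2$ setting. Restricting $\psi^g(\tau_\C)=\theta^g_\C(L)\tau_\C$ along the zero section, using naturality of $\psi^g$ and that $x$ is a non-zero-divisor in $E_2^*BU(1)\cong E_2^*[\![x]\!]$, is precisely what is meant.
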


The string class $\theta^g$ is much harder to calculate. 
We will first compute $(\theta^g)^2$.
Since the diagram
\begin{eqnarray}\label{}
&\xymatrix{
BString
\ar[r]^-{\Delta}\ar[d]^{c}
&
BString \times BString
\ar[d]^\mu
\\
BU\langle 6 \rangle 
\ar[r]^{re}
&
BString
}&
\end{eqnarray}
commutes, we obtain with part (ii) of \ref{cannibalisticsum} the equality
\begin{eqnarray}\label{square}
c^*\theta^{g}(re)&=&(\theta^g)^2.
\end{eqnarray}
Now write the string Thom class as a product $re^*\sigma=r_U\cdot x$, where
$x$ is the complex Thom class of $BU\langle 6\rangle \to BU$. The latter map can be used to pull back $\theta_\C^{g}$ to 
$BU\langle 6\rangle$.
\begin{cor} There is the identity
\label{Cor4.4}$$\theta^g (re) = q_0^g r_U^{det(g)-1}\theta_\C^g.$$
In particular, the cannibalistic class is the reduction of an $SU$-characteristic class if and only if the determinant of $g$ is 1.
\end{cor}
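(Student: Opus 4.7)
The plan is to compute $\theta^g(re)$ directly from its defining equation by exploiting the multiplicative factorization of the pulled-back Thom class. First I would recall that, by the very construction of $r_U$ as the difference class between the Witten and complex orientations, pulling the Witten Thom class $\sigma$ back along $re: BU\langle 6\rangle \to BString$ gives $re^*\sigma = r_U \cdot x$, where $x$ denotes the complex Thom class of the universal bundle on $BU\langle 6\rangle$.

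Next I would apply $\psi^g$ to both sides and use that $\psi^g$ is a ring map to obtain $\psi^g(re^*\sigma) = \psi^g(r_U)\,\psi^g(x)$. Substituting Theorem \ref{action rU} for the first factor, namely $\psi^g(r_U) = q_0^g\, r_U^{\det(g)}$, together with the definition of the complex cannibalistic class $\psi^g(x) = \theta_\C^g \cdot x$ for the second, and then dividing both sides by $re^*\sigma = r_U \cdot x$, the defining equation of $\theta^g(re)$ becomes
$$\theta^g(re) \;=\; q_0^g \, r_U^{\det(g)-1} \, \theta_\C^g,$$
which is the claimed identity.

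For the final assertion I would argue that $q_0^g$ lies in $E_2^*BSU$ by Theorem \ref{action rU}, and $\theta_\C^g$ is a complex characteristic class, so its pullback to $BU\langle 6\rangle$ factors through $BU$ and a fortiori through $BSU$. The only potential obstruction to $\theta^g(re)$ being the reduction of an $SU$-class is therefore the power $r_U^{\det(g)-1}$. Since $r_U$ topologically generates $E_2^*BU\langle 6\rangle$ as an $E_2^*BSU$-algebra via the presentation $E_2^*BU\langle 6\rangle \cong E_2^*BSU[\![\tilde{r}_U]\!]$, in particular $r_U$ itself is not pulled back from $BSU$, and $r_U^{\det(g)-1}$ lies in the image of $E_2^*BSU \to E_2^*BU\langle 6\rangle$ exactly when $\det(g)=1$. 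This settles both parts.

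The main obstacle in proving this corollary is essentially nothing beyond bookkeeping of exponents: all of the hard work has already been absorbed into Theorem \ref{action rU}. The only subtle point is ensuring that $q_0^g$ genuinely descends to $BSU$, which is built into the statement of Theorem \ref{action rU} and is what allows the second clause of the corollary to pin down divisibility so cleanly.
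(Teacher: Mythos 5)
Your proof of the identity is exactly the paper's argument: write $re^*\sigma = r_U\cdot x$, apply the multiplicative operation $\psi^g$, insert $\psi^g(r_U)=q_0^g\,r_U^{\det(g)}$ from Theorem \ref{action rU} and $\psi^g(x)=\theta_\C^g\, x$, and divide by the Thom class. Your justification of the second clause (which the paper states without proof) is also sound, with one small point worth making explicit: for $\det(g)\neq 1$ the series $r_U^{\det(g)-1}=(1+\tilde{r}_U)^{\det(g)-1}$ has linear coefficient $\det(g)-1$, which is nonzero in the torsion-free ring $E_2^*$, so it genuinely fails to lie in the subring $E_2^*BSU\subset E_2^*BSU[\![\tilde{r}_U]\!]$ --- this is what is needed, rather than only the weaker observation that $r_U$ itself is not pulled back from $BSU$.
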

\begin{proof}
Calculate with Lemma \ref{action rU}
$$\theta^{g}(re)=\frac{\psi^{g}(r_Ux)}{r_Ux}=\frac{\psi^{g}(r_U)}{r_U}\theta_\C^{g}=q_0^g r_U^{\det (g)-1}\theta_\C^{g}.$$

\end{proof}
\begin{cor} \label{realmain}
$(\theta^g)^2  = c^*(q_0^g\theta_\C^g) r^{{det(g)-1}}.$
\end{cor}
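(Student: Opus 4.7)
The plan is to obtain this identity as a direct consequence of Corollary \ref{Cor4.4} together with the commutative diagram that precedes equation (\ref{square}). The idea is that $(\theta^g)^2$ has already been identified with $c^*\theta^g(re)$, so the only thing left to do is pull back the formula from Corollary \ref{Cor4.4} along the complexification map $c\colon BString\to BU\langle 6\rangle$.

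More concretely, I would proceed as follows. Start from the identity
$$\theta^g(re) \;=\; q_0^g\, r_U^{\det(g)-1}\,\theta_\C^g$$
established in Corollary \ref{Cor4.4}, and apply the ring homomorphism $c^*$ to both sides. Since $c^*r_U = r$ by the very definition of the class $r$ (given just before Theorem \ref{2.4}), we get
$$c^*\theta^g(re) \;=\; c^*(q_0^g)\, r^{\det(g)-1}\, c^*(\theta_\C^g) \;=\; c^*(q_0^g\,\theta_\C^g)\, r^{\det(g)-1}.$$
Now invoke equation (\ref{square}), which asserts that the left-hand side equals $(\theta^g)^2$; this gives the desired formula.

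There is essentially no obstacle here: the proof is a one-line computation that amounts to pulling back Corollary \ref{Cor4.4} along $c$. All the real work has already been done in proving Theorem \ref{action rU} (which produces $q_0^g$), in establishing equation (\ref{square}) via the commutativity of the square relating $\Delta$, $\mu$, $c$ and $re$, and in the multiplicativity statements from Proposition \ref{cannibalisticsum}. The only thing to verify is that $c^*$ is a ring map, so that the product $q_0^g\,\theta_\C^g$ pulls back to $c^*(q_0^g)\cdot c^*(\theta_\C^g)$, which is immediate.
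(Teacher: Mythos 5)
Your proof is correct and is essentially the argument the paper gives: combine equation (\ref{square}) with Corollary \ref{Cor4.4}, pulling back along $c$ and using $c^*r_U=r$. Your write-up just spells out the pullback step a bit more explicitly.
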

\begin{proof}
The equality follows from equation (\ref{square}) and Corollary \ref{Cor4.4}.
\end{proof}
There is a more convenient way to describe the string cannibalistic classes which uses a splitting principle. 
\begin{prop}
Let $BT^\infty=\colim BT^n$ be a maximal torus of $BSpin$ and let 
$P^\infty = \colim P^n$ be the fibration over $BT^\infty$.
Then the induced map
$$ E_2^*BString  \lra  E_2^*P^\infty$$
is an injection.
\end{prop}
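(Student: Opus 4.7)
The strategy is a two-stage splitting principle. Since each $P^n$ is by definition the pullback of $BString\to BSpin$ along $BT^n\to BSpin$, passing to the colimit yields a pullback square
$$\xymatrix{P^\infty\ar[r]\ar[d] & BString\ar[d]\\ BT^\infty\ar[r] & BSpin}$$
whose vertical maps are Serre fibrations with fiber $K(\Z,3)$.

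First I would appeal to the Kitchloo--Laures--Wilson-type computation already invoked in Section~2 to identify
$$E_2^*BString\cong E_2^*BSpin[\![\tilde r]\!].$$
The analogous calculation for $K(\Z,3)\to P^\infty\to BT^\infty$ is formally identical to the earlier lemma for $E_2^*P$ (the Serre / generalized Atiyah--Hirzebruch spectral sequence collapses because $H^*(BT^\infty)$ and $E_2^*K(\Z,3)$ are both concentrated in even degrees), giving
$$E_2^*P^\infty\cong E_2^*BT^\infty[\![\tilde r_{P^\infty}]\!].$$
By naturality of these identifications applied to the pullback square above, $\tilde r$ restricts to $\tilde r_{P^\infty}$ and any class coming from $BSpin$ restricts to its pullback in $E_2^*BT^\infty$.

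The claim therefore reduces to the classical splitting principle in $E_2$-cohomology at the spin level, namely the injectivity of $E_2^*BSpin\to E_2^*BT^\infty$. This is essentially built into the construction of the Pontryagin classes recalled in Section~2: $p_i$ restricts to the $i$-th elementary symmetric function in the products $x_j\bar x_j$ on the maximal torus, where $\bar x_j=[-1]_F(x_j)=-x_j+\cdots$. Filtering by total degree in the $x_j$, the leading term of $p_i$ is $(-1)^i e_i(x_1^2,x_2^2,\ldots)$, and since the elementary symmetric polynomials in the $x_j^2$ are algebraically independent over $\pi_*E_2$, an associated graded argument gives the desired injectivity. Combined with the previous paragraph and the power-series structure in the single generator $\tilde r$, this finishes the proof.

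The only genuinely non-formal step is the spin-level injectivity; the collapse argument over $BT^\infty$ is a direct extension of the earlier lemma, and the identification of $\tilde r$ with $\tilde r_{P^\infty}$ is by naturality. A minor technical point is the passage from finite $n$ to the colimit, which is handled by the Milnor $\lim^1$ sequence applied to the Mittag--Leffler restriction tower $E_2^*P^{n+1}\to E_2^*P^n$.
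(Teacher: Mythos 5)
Your overall strategy is the same as the paper's: both arguments reduce the statement, via the collapsing generalized Atiyah--Hirzebruch spectral sequences for the two $K(\Z,3)$-fibrations, to injectivity of the base restriction $E_2^*BSpin \to E_2^*BT^\infty$, which the paper simply quotes from \cite[Corollary 3.8]{MR3471093} while you re-prove it by the symmetric-function/associated-graded argument (legitimate, but note that it rests on the freeness of $E_2^*BSpin$ on the Pontryagin classes together with their torus restriction, i.e.\ on the same input from that reference; Section~2 only recalls this for $T_1$, so a word about passing from $T_1$ to $E_2$ is needed).

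The genuine gap is the naturality step ``$\tilde r$ restricts to $\tilde r_{P^\infty}$''. By the paper's own formulas $r(re)=r_U\cdot\psi^{-1}r_U$ and $r(i)=r_K^2$, the pullback of the string class $r=c^*r_U$ to $P^\infty$ is $r_{P^\infty}\cdot\psi^{-1}(r_{P^\infty})$, not $r_{P^\infty}=\kappa^*r_U$; its restriction to the fiber is $r_K^2$, so $\iota^*\tilde r$ restricts to $\tilde r_K^2+2\tilde r_K$, which is \emph{not} a topological generator of $E_2^*K(\Z,3)=E_2^*[\![\tilde r_K]\!]$ (mod $2$ it is $\tilde r_K^2$). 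Consequently the powers of $\iota^*\tilde r$ do not form a topological basis of $E_2^*P^\infty$ over $E_2^*BT^\infty$, and your final deduction --- injectivity read off coefficientwise from base injectivity --- does not apply as written; with the correct image one would still have to prove that substitution $t\mapsto \iota^*\tilde r$ is injective on $E_2^*BT^\infty[\![t]\!]$, which is not automatic and is nowhere addressed. The repair is to forget the specific class $r$: the collapse of the spectral sequence for $K(\Z,3)\to BString\to BSpin$ lets you choose a class in $E_2^*BString$ restricting to the fiber generator $\tilde r_K$, and its pullback to $P^\infty$ is then a power-series generator over $E_2^*BT^\infty$ by the same collapse over $BT^\infty$; with such a compatible pair of generators the comparison argument goes through and is exactly the paper's proof (base injectivity plus the generalized Atiyah--Hirzebruch spectral sequence).
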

\begin{proof}
It follows from \cite[Corollary 3.8]{MR3471093} that the restriction from $E_2^*BSpin $ to $E_2^*BT^\infty$ is injective and hence so is the restriction to $P^\infty$ by the generalized Atiyah-Hirzebruch spectral sequence.
\end{proof}
Hence, it suffices to consider the bundle $L^2-1$ over $P$. 
We have seen in \ref{su bdl} that this bundle admits a reduction to the special unitary group. This gives
\begin{eqnarray}\label{Formula 4.5}
\theta^g(L^2-1)&=&\theta^g(re((1-L)^2-(1-\bar{L})(1-L)))\\
&=& (q_0^g r_U^{det(g)-1}\theta_\C^g)((1-L)^2-(1-\bar{L})(1-L))
\end{eqnarray}
The class $q_0$ can then be computed with  Formula \ref{formula q0}.
\section{The homology of string characteristic classes}\label{homology}
In this section we will use the calculation of cannibalistic classes to compute the  homology of string characteristic classes.
\begin{lemma}
For all $c\in T_i^* MString$ the  diagram 
$$ \xymatrix{\pi_* E_2 \wedge BString_+\ar[r]^-\rho &\Hom_{cts}(E_2^*BString, E_2{}_*)\\
\pi_* E_2 \wedge MString \ar[r]^-\rho \ar[u]_\varphi ^\cong \ar[d]^{(1\wedge c)_* }  &\Hom_{cts}(E_2^*MString, E_2{}_*)\ar[u]^\cong_\varphi  \ar[d]^{c_\sharp}  \\
\pi_* E_2 \wedge T_i \ar[r]^-\phi &\Map_{cts}(\G_2/G_i, E_2{}_*) }$$
commutes. Here, $\varphi $ is the Thom isomorphism, $\rho$ is the duality map and the lower right vertical arrow takes a homomorphism $b$ to the map
$$c_\sharp(b): g \mapsto b(MString \stackrel{c}{\lra} \Sigma ^*  T_i \lra \Sigma ^*  E_2 \stackrel{\psi^g}{\lra } \Sigma^*  E_2).$$
\end{lemma}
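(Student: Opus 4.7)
The strategy is to verify the two squares of the diagram independently, in each case by unwinding the relevant maps to explicit spectrum-level composites and then invoking associativity of the $E_2$-multiplication and of the smash product.

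For the \emph{lower} square, let $x\colon S^k \to E_2 \wedge MString$ represent a class in $\pi_*(E_2 \wedge MString)$. By the formula for $\phi$ in the preceding Proposition, $\phi((1 \wedge c)_* x)(g)$ is the composite
\[
S^k \xrightarrow{x} E_2 \wedge MString \xrightarrow{1 \wedge c} E_2 \wedge T_i \xrightarrow{1 \wedge \psi^g} E_2 \wedge T_i \xrightarrow{1\wedge j_i} E_2 \wedge E_2 \xrightarrow{\mu} E_2,
\]
with $j_i\colon T_i \to E_2$ the canonical inclusion and $\mu$ the multiplication. On the other hand, by the definition of the duality map $\rho$ (induced by $\mu$) and of $c_\sharp$,
\[
(c_\sharp \rho(x))(g) \;=\; \rho(x)\bigl(\psi^g \circ j_i \circ c\bigr) \;=\; \mu \circ \bigl(1 \wedge (\psi^g \circ j_i \circ c)\bigr) \circ x.
\]
Associativity of the smash product immediately identifies these two composites, so the lower square commutes on the nose.

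For the \emph{upper} square, I would make explicit that both Thom isomorphisms arise from the Thom class $\sigma\colon MString \to E_2$ combined with the diagonal $\Delta\colon MString \to MString \wedge BString_+$. In homology $\varphi_{\mathrm{hom}}(x) = (\mu \wedge 1)(1 \wedge \sigma \wedge 1)(1 \wedge \Delta) \circ x$, while in cohomology $\varphi_{\mathrm{coh}}(b) = \mu \circ (\sigma \wedge b) \circ \Delta$, so that the right-hand vertical arrow of the upper square is precomposition with $\varphi_{\mathrm{coh}}$. Commutativity is then the adjunction identity
\[
\rho\bigl(\varphi_{\mathrm{hom}}(x)\bigr)(b) \;=\; \rho(x)\bigl(\varphi_{\mathrm{coh}}(b)\bigr)
\]
for all $b \in E_2^*BString$, which on both sides unpacks to a single composite of $x$, $1\wedge \Delta$, $1\wedge \sigma \wedge b$, and the iterated $E_2$-multiplication $\mu(\mu \wedge 1)$. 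Agreement then follows from associativity (and homotopy-commutativity) of $\mu$.

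The main obstacle I expect is purely bookkeeping: one must identify the two instances of $\varphi$ in the lemma as the homological and cohomological Thom isomorphisms associated to the same Thom class $\sigma$, and verify that they are adjoint under the Kronecker pairing. Continuity (the use of $\Hom_{cts}$ and $\Map_{cts}$) is handled by the standard finite-skeleton argument for the filtration of $BString$, and does not affect the core algebraic identity. Modulo these points, the proof is a routine chase of definitions.
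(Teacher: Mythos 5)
Your proof is correct and is essentially the paper's argument: the paper simply states that ``the diagram chase is readily verified,'' and your verification of the lower square from the explicit description of $\phi$ and of the upper square from the adjunction of the homological and cohomological Thom isomorphisms (both built from the Thom class $\sigma$ and the Thom diagonal) under the Kronecker pairing is exactly that chase, carried out in detail.
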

\begin{proof}
The diagram chase is readily verified.
\end{proof}
We denote by $c$ also the composition $MString \stackrel{c}{\lra} \Sigma ^*  T_i \lra \Sigma ^*  E_2$,
and we denote by $\tilde{c}$ its preimage under the Thom isomorphism $E_2^*BString\cong E_2^*MString$.
Set
$$\Theta_c\stackrel{def}{=} c_\sharp \varphi ^{-1} \rho=\phi (1\wedge c)_* \varphi^{-1}: \pi_*E_2\wedge BString _+\lra \Map_{cts}(\G_2/G_i,E_2{}_*).$$

\begin{prop}\label{canab}
For all $a \in \pi_* E_2\wedge BString _+$ we have
$$ \Theta_c(a)(g ) = \left< a, \theta^g\cdot  \psi^g (\tilde{c})\right>.$$

\end{prop}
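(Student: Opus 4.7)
The plan is to unwind the definition of $\Theta_c$ using the commutative diagram of the previous lemma, and then carry out the computation in $E_2^*MString$ using only the multiplicativity of $\psi^g$ and the defining equation $\psi^g(\tau)=\theta^g\tau$ for the cannibalistic class.

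First I would observe that by definition $\Theta_c(a)(g)=c_\sharp(\varphi^{-1}\rho(a))(g)$, and by the description of $c_\sharp$ in the previous lemma this is the value of the homomorphism $\varphi^{-1}\rho(a)\in\Hom_{cts}(E_2^*MString,E_2{}_*)$ on the cohomology class $\psi^g\circ c\in E_2^*MString$. Writing the Thom isomorphism $\varphi:E_2^*BString\to E_2^*MString$ explicitly as multiplication by the Thom class $\tau$, this can be rewritten via the Kronecker pairing as
\begin{equation*}
\Theta_c(a)(g)=\bigl\langle a,\varphi^{-1}(\psi^g\circ c)\bigr\rangle,
\end{equation*}
so the whole proposition is reduced to the identity $\varphi^{-1}(\psi^g\circ c)=\theta^g\cdot\psi^g(\tilde c)$ in $E_2^*BString$.

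Next I would compute $\psi^g\circ c$ in $E_2^*MString$. By definition of $\tilde c$ we have $c=\tilde c\cdot\tau$ under the Thom isomorphism. Since $\psi^g:E_2\to E_2$ is a ring map, the induced action on $E_2^*MString$ is a ring homomorphism, so
\begin{equation*}
\psi^g(c)=\psi^g(\tilde c)\cdot\psi^g(\tau)=\psi^g(\tilde c)\cdot\theta^g\cdot\tau,
\end{equation*}
where the last equality is the defining property of $\theta^g$. Dividing by $\tau$, i.e.\ applying $\varphi^{-1}$, yields $\varphi^{-1}(\psi^g\circ c)=\theta^g\cdot\psi^g(\tilde c)$, as desired. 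Plugging this into the displayed formula above gives the statement of the proposition.

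There is no serious obstacle here: everything rests on the commutativity of the diagram in the previous lemma, the fact that the Thom isomorphism is just multiplication by $\tau$, and the multiplicativity of the stable operation $\psi^g$ on $E_2$-cohomology. The only subtlety worth mentioning is ensuring that the two usages of $\varphi$ (on homotopy of $E_2\wedge BString_+$ versus on cohomology of $BString$) are compatible in the way stated, but this is precisely the content of commutativity of the upper square in the previous lemma.
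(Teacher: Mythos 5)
Your proof is correct and follows essentially the same route as the paper: unwind $\Theta_c$ via the lemma's diagram, use $c=\tilde c\cdot\tau$ and $\psi^g(\tau)=\theta^g\tau$ with multiplicativity of $\psi^g$, and transfer back through the Thom isomorphism to the Kronecker pairing with $a$. The paper merely phrases the same computation by evaluating the homomorphism $b=\varphi^{-1}\rho(a)$ directly on $\psi^g(\tau\tilde c)$ rather than first dividing by $\tau$ in cohomology.
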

\begin{proof} Set $b= \varphi^{-1} \rho (a)$. Then we have
\begin{eqnarray*}
 \Theta_c(a)(g ) &=& c_\sharp(b) (g) = b(\psi^g (\tau \tilde{c} )) \\
 &=&  b(\theta^g \cdot \psi^g (\tilde{c} )\tau )= \rho(a)(\theta^g \cdot \psi^g (\tilde{c} ))\\
&=&  \left< a, \theta^g \cdot \psi^g (\tilde{c})\right>
\end{eqnarray*}
\end{proof}

\begin{thm} Suppose $\tilde{c}$ is in the image of $E_2^*BSpin\to E_2^*BString$.
Then we have an equality of power series
$$\sum_i  \Theta_c(a_i)(g ) x^i = (q_0^g\theta_\C^g)^{\frac{1}{2}}((\bar{L}-L)^2) \psi^g(\tilde{c}(L^2-1))$$ 
Here, $x$ is the Euler class of the canonical bundle $L$ on $BS^1 $.
\end{thm}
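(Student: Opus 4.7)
The plan is to evaluate the left-hand side using Proposition \ref{canab} and Lemma \ref{small}, reduce the computation to the maximal torus $BS^1 \hookrightarrow BSpin$, and then invoke Corollary \ref{realmain} after squaring. Concretely, I would first rewrite $\Theta_c(a_i)(g) = \langle a_i, \theta^g\cdot\psi^g(\tilde c)\rangle$ by Proposition \ref{canab}, transport the pairing to $BSpin$ via Lemma \ref{small} to get $\langle b_i, R^*(\theta^g\cdot\psi^g(\tilde c))\rangle$, and factor out $\psi^g(\tilde c)$: because $\tilde c$ comes from $BSpin$ by hypothesis, so does $\psi^g(\tilde c)$ by naturality of the $\S_2$-action, and the ring homomorphism $R^*$ splits the inclusion $E_2^*BSpin \hookrightarrow E_2^*BString$, hence fixes such classes. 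Since $b_i$ is dual to $c_1^i$, the generating-series identity $\sum_i\langle b_i, z\rangle x^i = z|_{BS^1}$ for $z \in E_2^*BSpin$, combined with $\psi^g(\tilde c)|_{BS^1} = \psi^g(\tilde c(L^2-1))$, reduces the theorem to proving
\[R^*(\theta^g)|_{BS^1}=(q_0^g\theta_\C^g)^{1/2}((\bar L-L)^2).\]

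For this square-root identity, I would square both sides and invoke Corollary \ref{realmain}: $(\theta^g)^2 = c^*(q_0^g\theta_\C^g)\cdot r^{\det(g)-1}$. Applying the ring map $R^*$ kills the $r$-power, because $r = 1+\tilde r$ forces $R^*(r) = 1$. Restricting the remaining factor to $BS^1$, it suffices to observe that $c(L^2-1) = L^2 + \bar L^2 - 2 = (\bar L - L)^2$ as a complex bundle on $BS^1$ (realifying and then complexifying a complex line doubles it and adds its conjugate), and that $q_0^g$ (an $SU$-characteristic class) and $\theta_\C^g$ (a $U$-characteristic class) can be evaluated on this complex bundle without needing to choose any $BU\langle 6\rangle$-lift. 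Hence $(R^*(\theta^g)|_{BS^1})^2 = (q_0^g\theta_\C^g)((\bar L-L)^2)$. Both sides have constant term $1$ at $x=0$, where the bundle $L^2-1$ degenerates to the trivial string bundle and $\theta^g$ becomes $1$, so the positive square root is selected and yields the claim.

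The main obstacle I anticipate is the bookkeeping around the splitting $R^*$ and its compatibility with restriction along the maximal torus. In particular, I would need to verify that ``apply $R^*$, then restrict to $BS^1$'' coincides with ``restrict to $P$, then set $\tilde r_P = 0$'', using the compatibility of the fibrations $K(\Z,3)\to P\to BS^1$ and $K(\Z,3)\to BString\to BSpin$ under which $\tilde r$ pulls back to $\tilde r_P$. This compatibility is what converts the abstract identity on $BString$ into the concrete power-series identity in $E_2^*[\![x]\!]$ and underlies the unambiguous evaluation of $q_0^g\theta_\C^g$ on $(\bar L - L)^2$.
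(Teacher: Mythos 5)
Your proof is correct and follows essentially the same route as the paper's own argument: Proposition \ref{canab} together with Lemma \ref{small}, the generating-series reduction to the maximal torus $\iota:BS^1\to BSpin$, and then Corollary \ref{realmain} combined with $R^*(r)=1$ and the identification $c(L^2-1)=L^2+\bar{L}^2-2=(\bar{L}-L)^2$. The additional points you spell out --- that $\psi^g(\tilde c)$ and $c^*(q_0^g\theta_\C^g)$ come from $BSpin$ so that $R^*$ fixes them, and the selection of the square root by its constant term $1$ in the integral domain $E_2^0[\![x]\!]$ --- are exactly the details the paper leaves implicit, so they strengthen rather than change the argument.
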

\begin{proof}
We have with Proposition \ref{canab} and Lemma \ref{small}
\begin{eqnarray*}
\sum_i  \Theta_c(a_i)(g ) x^i & =& \sum_i \left< a_i, \theta^g\cdot  \psi^g (\tilde{c})\right>x^i
 \\
 &=& \sum_i  \left< b_i,  R^*(\theta^g)  R^*( \psi^g (\tilde{c})) \right> x^i\\
 &=&  \iota^* (R^*(\theta^g))\iota^*(  \psi^g (\tilde{c}) ).
 \end{eqnarray*}
Corollary \ref{realmain} implies
$$ \iota^*R^*(\theta^g)^2 = \iota^* c^*(q_0^g\theta^g_\C)^2 =q_0^g\theta^g_\C(\bar{L}-L)^2$$
and hence gives the formula.
\end{proof}
\bigskip

Next, we look at classes in the image of $i: K(\Z,3)\to BString$ in $E_2$-homology. Note that the universal string bundle is trivial when pulled back along $i$.  By equation (\ref{Thomclass}) the class $r_K$ is the image of the string Thom class under the map
$$ Ti: K(\Z,3)_+ \lra MString.$$   This gives
$$i^*\theta^g=i^*\frac{\psi^g \tau}{\tau}=\frac{\psi^g r_K}{r_K}={r}_K^{\det (g )-1}$$
Hence, we obtain for arbitrary $c$ 
$$
\Theta_c(i_*s)(g)= \langle i_*s, \theta^g \psi^g(\tilde{c})\rangle = 
\langle s, {r}_K^{\det (g )-1}\psi^{g}(i^*\tilde{c})\rangle 
$$
\begin{thm}
 Let $q_k \in (E_2)_{0}K(\Z,3)$ be the dual of $\tilde{r}_K^k$. Then it holds
 $$ \Theta_{\tilde{r}}(i_*(q_k))(g)=\binom{3\, \det(g)-1}{k}-\binom{\det(g)-1}{k}.
$$

\end{thm}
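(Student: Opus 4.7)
The plan is to specialize the formula derived immediately before the theorem, namely
$$\Theta_c(i_*s)(g)=\langle s,\,r_K^{\det(g)-1}\psi^{g}(i^*\tilde c)\rangle,$$
to $c=\tilde r$ and $s=q_k$, and then reduce everything to the binomial pairing on $E_2^*K(\Z,3)=E_2^*[\![\tilde r_K]\!]$.

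First I would compute $i^*\tilde r$. By the second displayed formula of the remark following Theorem \ref{2.4}, we have $r(i)=r_K^2$, so $i^*\tilde c=i^*(r-1)=r_K^2-1$. Next, applying $\psi^g$ and invoking Theorem \ref{action K}, which gives $\psi^g(r_K)=r_K^{\det(g)}$, yields
$$\psi^{g}(i^*\tilde r)=r_K^{2\det(g)}-1.$$
Substituting back into the displayed formula and multiplying out produces
$$\Theta_{\tilde r}(i_*q_k)(g)=\langle q_k,\,r_K^{3\det(g)-1}-r_K^{\det(g)-1}\rangle.$$

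It then remains to evaluate the pairing $\langle q_k,r_K^m\rangle$ for $m\in\Z_2$. Writing $r_K=1+\tilde r_K$, the key fact (established inside the proof that $\alpha$ takes values in $\Z_2^\times$, via the binomial series $B_\alpha(t)=\sum\binom{\alpha}{n}t^n$) is that $r_K^m=B_m(\tilde r_K)$ is a well-defined element of $E_2^*[\![\tilde r_K]\!]$ for any $m\in\Z_2$. Since $q_k$ is by definition dual to $\tilde r_K^k$, this immediately gives $\langle q_k,r_K^m\rangle=\binom{m}{k}$, whence
$$\Theta_{\tilde r}(i_*q_k)(g)=\binom{3\det(g)-1}{k}-\binom{\det(g)-1}{k},$$
as claimed.

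There is essentially no obstacle here beyond keeping track of the pullback $i^*r=r_K^2$ and combining it with the Galois action formula $\psi^g(r_K)=r_K^{\det(g)}$; the only conceptual subtlety is the interpretation of $r_K^m$ for a $2$-adic exponent $m$, which is exactly the binomial-series interpretation already made legitimate in the proof of Theorem \ref{action K}.
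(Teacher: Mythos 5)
Your proposal is correct and follows essentially the same route as the paper: the paper likewise combines the displayed formula $\Theta_c(i_*s)(g)=\langle s, r_K^{\det(g)-1}\psi^g(i^*\tilde c)\rangle$ with $i^*r=r_K^2$ and Theorem \ref{action K} to obtain (in generating-function form) $\sum_k \Theta_{\tilde r}(i_*(q_k))(g)\,\tilde r_K^k = r_K^{\det(g)-1}(r_K^{2\det(g)}-1)$, and then reads off the binomial coefficients. Your coefficient-wise evaluation $\langle q_k, r_K^m\rangle=\binom{m}{k}$ via the $2$-adic binomial series is just an unpacking of that same step.
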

\begin{proof} The equality 
$$ \sum_k \Theta_{\tilde{r}}(i_*(q_k))(g)\tilde{r}_K^k=
{r}_K^{\det(g)-1}({r}_K^{2\det(g)}-1)
$$
follows from the fact that $i^*r=r_K^2$, Theorem \ref{action K} and the calculation above.

\end{proof}

\bibliographystyle{amsalpha}

\bibliography{toda}

\providecommand{\bysame}{\leavevmode\hbox to3em{\hrulefill}\thinspace}
\providecommand{\MR}{\relax\ifhmode\unskip\space\fi MR }
\providecommand{\MRhref}[2]{%
  \href{http://www.ams.org/mathscinet-getitem?mr=#1}{#2}
}
\providecommand{\href}[2]{#2}
\begin{thebibliography}{DFHH14}

\bibitem[Ada65]{MR0198469}
J.~F. Adams, \emph{On the groups {$J(X)$}. {III}}, Topology \textbf{3} (1965),
  193--222. \MR{0198469}

\bibitem[AHR]{AHR10}
Matt Ando, Michael~J. Hopkins, and Charles Rezk, \emph{{Multiplicative
  orientations of $KO$-theory and of the spectrum of topological modular
  forms}}, unpublished, 2010.

\bibitem[AHS01]{MR1869850}
M.~Ando, M.~J. Hopkins, and N.~P. Strickland, \emph{Elliptic spectra, the
  {W}itten genus and the theorem of the cube}, Invent. Math. \textbf{146}
  (2001), no.~3, 595--687. \MR{1869850 (2002g:55009)}

\bibitem[AS01]{MR1791270}
M.~Ando and N.~P. Strickland, \emph{Weil pairings and {M}orava {$K$}-theory},
  Topology \textbf{40} (2001), no.~1, 127--156. \MR{1791270}

\bibitem[Ati66]{MR0206940}
M.~F. Atiyah, \emph{{$K$}-theory and reality}, Quart. J. Math. Oxford Ser. (2)
  \textbf{17} (1966), 367--386. \MR{0206940 (34 \#6756)}

\bibitem[Bea15]{MR3450774}
Agn{\`e}s Beaudry, \emph{The algebraic duality resolution at p = 2}, Algebr.
  Geom. Topol. \textbf{15} (2015), no.~6, 3653--3705. \MR{3450774}

\bibitem[Bea17]{MR3581316}
Agn\`es Beaudry, \emph{Towards the homotopy of the {$K(2)$}-local {M}oore
  spectrum at {$p=2$}}, Adv. Math. \textbf{306} (2017), 722--788. \MR{3581316}

\bibitem[Bot62]{MR0153019}
Raoul Bott, \emph{A note on the {$KO$}-theory of sphere-bundles}, Bull. Amer.
  Math. Soc. \textbf{68} (1962), 395--400. \MR{0153019}

\bibitem[DFHH14]{MR3223024}
Christopher~L. Douglas, John Francis, Andr\'e~G. Henriques, and Michael~A. Hill
  (eds.), \emph{Topological modular forms}, Mathematical Surveys and
  Monographs, vol. 201, American Mathematical Society, Providence, RI, 2014.
  \MR{3223024}

\bibitem[DH95]{MR1333942}
Ethan~S. Devinatz and Michael~J. Hopkins, \emph{The action of the {M}orava
  stabilizer group on the {L}ubin-{T}ate moduli space of lifts}, Amer. J. Math.
  \textbf{117} (1995), no.~3, 669--710. \MR{1333942}

\bibitem[Fra92]{MR1235295}
Jens Franke, \emph{On the construction of elliptic cohomology}, Math. Nachr.
  \textbf{158} (1992), 43--65. \MR{1235295 (94h:55007)}

\bibitem[Goe10]{MR2648680}
Paul~G. Goerss, \emph{Topological modular forms [after {H}opkins, {M}iller and
  {L}urie]}, Ast\'erisque (2010), no.~332, Exp. No. 1005, viii, 221--255,
  S{\'e}minaire Bourbaki. Volume 2008/2009. Expos{\'e}s 997--1011. \MR{2648680
  (2011m:55003)}

\bibitem[HK01]{MR1808224}
Po~Hu and Igor Kriz, \emph{Real-oriented homotopy theory and an analogue of the
  {A}dams-{N}ovikov spectral sequence}, Topology \textbf{40} (2001), no.~2,
  317--399. \MR{1808224 (2002b:55032)}

\bibitem[HKM13]{MR3127044}
Hans-Werner Henn, Nasko Karamanov, and Mark Mahowald, \emph{The homotopy of the
  {$K(2)$}-local {M}oore spectrum at the prime 3 revisited}, Math. Z.
  \textbf{275} (2013), no.~3-4, 953--1004. \MR{3127044}

\bibitem[HL]{HL13}
Michael Hopkins and Jacob Lurie, \emph{{Ambidexterity in K(n)-Local Stable
  Homotopy Theory}}, preprint, 2013.

\bibitem[Hop02]{MR1989190}
M.~J. Hopkins, \emph{Algebraic topology and modular forms}, Proceedings of the
  {I}nternational {C}ongress of {M}athematicians, {V}ol. {I} ({B}eijing, 2002)
  (Beijing), Higher Ed. Press, 2002, pp.~291--317. \MR{1989190 (2004g:11032)}

\bibitem[Hov04]{MR2076002}
Mark Hovey, \emph{Operations and co-operations in {M}orava {$E$}-theory},
  Homology Homotopy Appl. \textbf{6} (2004), no.~1, 201--236. \MR{2076002}

\bibitem[KL02]{MR1909866}
Nitu Kitchloo and Gerd Laures, \emph{Real structures and {M}orava
  {$K$}-theories}, $K$-Theory \textbf{25} (2002), no.~3, 201--214. \MR{1909866
  (2003i:55005)}

\bibitem[KLW04]{MR2093483}
Nitu Kitchloo, Gerd Laures, and W.~Stephen Wilson, \emph{The {M}orava
  {$K$}-theory of spaces related to {$BO$}}, Adv. Math. \textbf{189} (2004),
  no.~1, 192--236. \MR{2093483 (2005k:55002)}

\bibitem[Lau03]{MR2004426}
Gerd Laures, \emph{An {$E_\infty$} splitting of spin bordism}, Amer. J. Math.
  \textbf{125} (2003), no.~5, 977--1027. \MR{2004426}

\bibitem[Lau16]{MR3471093}
\bysame, \emph{Characteristic classes in {$TMF$} of level {$\Gamma_1(3)$}},
  Trans. Amer. Math. Soc. \textbf{368} (2016), no.~10, 7339--7357. \MR{3471093}

\bibitem[LO16]{MR3448393}
Gerd Laures and Martin Olbermann, \emph{{$TMF_0(3)$}-{C}haracteristic classes
  for string bundles}, Math. Z. \textbf{282} (2016), no.~1-2, 511--533.
  \MR{3448393}

\bibitem[Mil89]{MR1022688}
Haynes Miller, \emph{The elliptic character and the {W}itten genus}, Algebraic
  topology ({E}vanston, {IL}, 1988), Contemp. Math., vol.~96, Amer. Math. Soc.,
  Providence, RI, 1989, pp.~281--289. \MR{1022688 (90i:55005)}

\bibitem[MR09]{MR2508904}
Mark Mahowald and Charles Rezk, \emph{Topological modular forms of level 3},
  Pure Appl. Math. Q. \textbf{5} (2009), no.~2, Special Issue: In honor of
  Friedrich Hirzebruch. Part 1, 853--872. \MR{2508904 (2010g:55010)}

\bibitem[Pet]{Pet11}
Eric Peterson, \emph{{The Morava E-theory of Eilenberg-Mac Lane spaces}},
  arXiv:1109.5989.

\bibitem[RW80]{MR584466}
Douglas~C. Ravenel and W.~Stephen Wilson, \emph{The {M}orava {$K$}-theories of
  {E}ilenberg-{M}ac {L}ane spaces and the {C}onner-{F}loyd conjecture}, Amer.
  J. Math. \textbf{102} (1980), no.~4, 691--748. \MR{584466 (81i:55005)}

\bibitem[RWY98]{MR1648284}
Douglas~C. Ravenel, W.~Stephen Wilson, and Nobuaki Yagita,
  \emph{Brown-{P}eterson cohomology from {M}orava {$K$}-theory}, $K$-Theory
  \textbf{15} (1998), no.~2, 147--199. \MR{1648284 (2000d:55012)}

\bibitem[Str00]{MR1763961}
N.~P. Strickland, \emph{Gross-{H}opkins duality}, Topology \textbf{39} (2000),
  no.~5, 1021--1033. \MR{1763961}

\bibitem[Wes]{W13}
Craig Westerland, \emph{A higher chromatic analogue of the image of j},
  arXiv:1210.2472 , 2013.

\end{thebibliography}

\end{document}